\documentclass[a4paper, 12pt]{article}

\usepackage{amsmath, amssymb, eucal, exscale, mathrsfs, bm, ulem, amsthm} 

\usepackage{color}
\usepackage{dsfont}
\usepackage{cancel}

\numberwithin{equation}{section}

\newcommand{\form}{\mathscr E}
\newcommand{\dom}{D(\mathscr E)}
\newcommand{\real}{{\mathbb R}}
\newcommand{\dis}{\displaystyle}

\newtheorem{theorem}{Theorem}[section]
\newtheorem{lem}[theorem]{Lemma}
\newtheorem{prop}[theorem]{Proposition}
\newtheorem{cor}[theorem]{Corollary}

\newtheorem{exam}{Example}[section]

\theoremstyle{definition}

\newtheorem{rem}[theorem]{Remark}

\allowdisplaybreaks[4]

\title{\sf Energy integrals and asymmetric co-potentials for closed forms}
\date{}
\author{
{\sc Kazuhiro Kuwae}\footnote{Kazuhiro Kuwae ({\sf kuwae@fukuoka-u.ac.jp})
Department of Applied Mathematics, Fukuoka University,
Fukuoka 814-0180, Japan. 
Supported in part by JSPS Grant-in-Aid for Scientific Research (S) (No. 25K24482).
}, 
\ \ \ 
{\sc Takumu Ooi}\footnote{Takumu Ooi ({\sf ooitaku@rs.tus.ac.jp}) 
Department of Mathematics, Faculty of Science and Technology, Tokyo University of Science, Noda, Chiba, 278-8510, Japan. 
Supported in part by JSPS Grant-in-Aid for Early-Career Scientists (No. 25K17270).
}, 
\ \ \ 
{\sc Kaneharu Tsuchida}\footnote{Kaneharu Tsuchida ({\sf tsuchida@nda.ac.jp}) 
Department of Mathematics, National Defense Academy, Yokosuka, Kanagawa, Japan.
},
\\
\ \ and\ \
{\sc Toshihiro Uemura}\footnote{Toshihiro Uemura ({\sf t-uemura@kansai-u.ac.jp})
Department of Mathematics, Faculty of Engineering Science, Kansai University, Suita, Osaka 564-8680, Japan. Supported in part by JSPS Grant-in-Aid for Scientific Research (C) (No. 25K07056). 
}
}

\begin{document}
\baselineskip 16pt

\maketitle

\begin{abstract}
We investigate the class of measures of finite energy integrals and the behavior of potentials and co-potentials associated with non-symmetric closed forms. In particular, we compare these objects with their symmetric counterparts from three viewpoints: a non-symmetric version of Stollmann--Voigt's inequality, non-symmetric perturbations of symmetric forms, and closed forms associated with non-symmetric jump-type forms. Our results indicate that measures of finite energy integrals, potentials, and co-potentials behave differently in the non-symmetric setting, requiring more delicate analysis than in the symmetric case.
\end{abstract}

{\it Keywords}:  non-symmetric closed form, semi-Dirichlet form, measures of finite energy integrals, potential function, Stollmann--Voigt's inequality.

{\it Mathematics Subject Classification (2020)}: Primary 31C25; 
Secondary 60J46, 60J45, 47D08

\section{Introduction}
The theory of Dirichlet forms and associated Markov processes provides a framework for studying potential theory. A central concept in this framework is the class of Radon measures called measures of finite energy integrals. In the classical theory of symmetric Dirichlet forms developed by Fukushima, Oshima, and Takeda \cite{FOT11}, this class plays a fundamental role in connecting analysis and probability. Analytically, for a closed form, a measure of finite energy integrals is a measure charging no set of zero capacity, and it ensures the existence and uniqueness of a finite energy potential whose energy represents the integral of quasi-continuous functions with respect to the measure. Probabilistically, via the Revuz correspondence, such a measure induces a positive continuous additive functional (PCAF) of the associated symmetric Markov process. Thus, the potential serves as a bridge between capacity and probabilistic dynamics.

When extending the theory to non-symmetric settings, particularly lower bounded closed forms and semi-Dirichlet forms $(\mathscr{E}, D(\mathscr{E}))$ on an $L^2$-space, the potential theoretic structures become richer and more delicate. As developed by Oshima \cite{O13}, analytic potential theory and the associated stochastic calculus remain available even when the dual semigroups are only positivity preserving rather than Markovian. Under the weak sector condition, a single measure $\nu$ of finite energy integrals gives rise to two distinct objects: the potential $U_\beta \nu$ and the co-potential $\widehat{U}_\beta \nu$, corresponding to the forward and dual semigroups, respectively. They are characterized by the identity $\mathscr{E}_\beta(U_\beta \nu, v)=\mathscr{E}_\beta(v,\widehat{U}_\beta \nu)=\int_E v\,d\nu$ for test functions $v$, where $\mathscr{E}_\beta$ denotes the sum of $\mathscr{E}$ and $\beta$ times the underlying $L^2$-inner product. A property we note early in this paper is that the total energies of these two 
potentials coincide from a macroscopic perspective: $\mathscr{E}_\beta(U_\beta \nu, 
U_\beta \nu) = \mathscr{E}_\beta(\widehat{U}_\beta \nu, \widehat{U}_\beta \nu)$. 
However, due to the asymmetry of a form, their pointwise 
behaviors can differ. For instance, even if the potential remains 
bounded, the co-potential may diverge at singularities or exhibit different decay 
rates at spatial infinity. This discrepancy arises because the antisymmetric part 
of the form acts as a source of energy that breaks the symmetry, distorting the 
spatial distribution of the potentials. The fine structures of these localized 
integral terms reveal the exact pointwise differences between the right and left 
potentials.

In this paper, we investigate the class of measures of finite energy integrals and analyze the differences between potentials and co-potentials for a non-symmetric form. In particular, we compare their properties in the non-symmetric and symmetric settings from three perspectives: Stollmann--Voigt's inequality, a non-symmetric perturbation of a symmetric form, a closed form associated with a non-symmetric jump kernel. Through these analyses, we show that the class of measures of finite energy integrals, as well as potentials and co-potentials, exhibits properties different from those in the symmetric case, highlighting the need for more delicate arguments in the non-symmetric setting. We present our results from these three viewpoints as follows.

As a theoretical basis, we establish a non-symmetric version of Stollmann--Voigt's inequality. Originally introduced by Stollmann and Voigt \cite{SV96} through an operator-theoretical approach, this inequality was later improved for symmetric Dirichlet forms in \cite{F00, ST05, BA04} for example, and recently proved by an alternative probabilistic method in \cite{OTU26-2}. We extend it to the non-symmetric setting of semi-Dirichlet forms. Analytically, this inequality yields a compact embedding from the Hilbert space induced by a form into an $L^2$-space with respect to the measure. In the symmetric case, the operator norm of the inclusion map is given by the $L^\infty$-norm of the associated potential, whereas in our non-symmetric setting, both the potential and co-potential determine this bound.

We investigate the differences in the structure of measures of finite energy integrals between symmetric and non-symmetric cases. A typical way to construct a non-symmetric closed form $\mathscr{E}$ is by adding a non-symmetric perturbation $b$ to a symmetric closed form $\mathscr{E}^0$. We examine the stability of the class $\mathsf{S}_0$ of Radon measures of finite energy integrals under such perturbations and establish analytical conditions under which the class remains invariant, $\mathsf{S}_0(\mathscr{E}^0)=\mathsf{S}_0(\mathscr{E})$, or becomes strictly larger, $\mathsf{S}_0(\mathscr{E}^0)\subsetneq\mathsf{S}_0(\mathscr{E})$. We show how the relative strength of the non-symmetric drift and the symmetric diffusion affects the structure of the finite energy class.

As another typical example of a non-symmetric form, we consider forms associated with non-symmetric jump kernels. In particular, we show that taking the transpose of the kernel preserves the structure of the class of measurs of finite energy 
integrals, while resulting in different potentials and co-potentials.

The present paper is organized as follows.
In Section \ref{sec_preparation}, we recall the framework of lower bounded closed forms and define the class $\mathsf{S}_0$ of measures of finite energy integrals. We also discuss the coincidence of the energies of potentials and co-potentials, and provide an example of a diffusion process with a singular drift illustrating spatial asymmetry.
Section \ref{sec_SVineq} is devoted to establishing a non-symmetric version of Stollmann--Voigt's inequality for semi-Dirichlet forms.
In Section \ref{sec_smallpertub}, we investigate the stability of the class $\mathsf{S}_0$ under non-symmetric perturbations. We show that, under certain conditions, the class of measures of finite energy integrals and the topology induced by it remain invariant, and provide examples.
In Section \ref{sec_largepurturb}, we present a criterion and a model showing that non-symmetric perturbations can enlarge the class of measures of finite energy integrals.
In Section \ref{sec_jump}, we consider non-symmetric jump-type forms and forms associated with their transposed jump kernels. We show that the pointwise differences between potentials and co-potentials are governed by the anti-symmetric part of the jump kernel, and provide an example of a stable-like process exhibiting distinct spatial decay rates at infinity.

\section{Measures of finite energy integrals for non-symmetric closed forms}\label{sec_preparation}

In this section we recall a lower bounded closed form  and semi-Dirichlet  form following \cite{O13}. 
To this end, let $(E,{\sf d})$ be a locally compact separable metric space and ${
  {\mathfrak{m}}}$ a positive Radon measure on $E$ with full topological support.
Let $\dom$ be a dense linear subspace of $L^2(E):=L^2(E;\mathfrak{m})$.  Denote by $(u,v)$ (resp. $\|u\|_{L^2}$) the inner product of $u$ and $v$ 
(resp. the $L^2$-norm of $u$).  A bilinear form $\form$ defined on $\dom\times \dom$ is called {\it a lower bounded closed form} on 
$L^2(E)$ if the following conditions $(\form.1), (\form.2), (\form.3)$ are satisfied: there exists $\beta_0\ge 0$ such that
\begin{itemize}
\item[$(\form.1)$] {\sf (lower boundedness):} For any $u \in \dom$, $\form_{\beta_0}(u,u)\ge 0$, where 
$$
\form_\beta(u,v):=\form(u,v) + \beta (u,v) \quad u,v \in \dom, \ \beta\ge 0.
$$
\item[$(\form.2)$] {\sf (strong sector condition):}  There exists a constant $K\ge 1$ such that 
$$
\big|\form(u,v)\big| \le K \sqrt{\form_{\beta_0}(u,u)} \sqrt{\form_{\beta_0}(v,v)} \quad u,v \in \dom.
$$
\item[$(\form.3)$] {\sf (closedness):} The space $\dom$ is a real Hilbert space relative to the inner product
$$
\form^{(s)}_\beta(u,v)=\frac 12 \Big(\form_{\beta}(u,v) +\form_{\beta}(v,u)\Big) \quad {\rm for \ all} \ \beta >\beta_0.
$$
\end{itemize}
 By utilizing $(\form.1)$,  we see that for any $u \in \dom$ and $\beta > \beta_0$, the relation
$$
\form_\beta(u,u) = \form_{\beta_0}(u,u) + (\beta-\beta_0)\|u\|_{L^2}^2 \geq 0
$$
holds. This immediately implies the following form equivalence for any $\alpha, \beta > \beta_0$:
\begin{equation} \label{form-top}
\Big( 1 \wedge \frac{\beta-\beta_0}{\alpha-\beta_0}\Big) \form_\alpha(u,u) \le \form_\beta(u,u) \le \Big( 1 \vee \frac{\beta-\beta_0}{\alpha-\beta_0}\Big) \form_\alpha(u,u), \quad u \in \dom.
\end{equation}
This means that the topology of the Hilbert space $\dom$ endowed with the inner product $\form^{(s)}_\beta(u,v)$ does not depend on the choice of $\beta > \beta_0$.   When $(\form.1)$ holds for $\beta_0=0$, the pair $(\form, \dom)$ is simply called a {\it nonnegative closed form} on $L^2(E)$.

The condition $(\form.2)$ yields the following condition $(\form.2)'$ 
with $K_{\beta}=K+\beta/(\beta-\beta_0)$: 
\begin{itemize}
\item[$(\form.2)'$] {\sf (weak sector condition):} For each $\beta>\beta_0$, there 
exists $K_{\beta}\geq1$ such that 
$$
\big|\form_{\beta}(u,v)\big| \le K_{\beta} \sqrt{\form_{\beta}(u,u)} \sqrt{\form_{\beta}(v,v)} \quad u,v \in \dom.
$$
\end{itemize}

For a lower bounded closed form $(\form, \dom)$ on $L^2(E)$ with a parameter $\beta_0\ge 0$,  there exist unique semigroups
$\{T_t; t>0\}$ and $\{\widehat{T}_t; t>0\}$ of linear operators on $L^2(E)$ satisfying 
$$
(T_t f,g)=(f, \widehat{T}_t g), \ \  \|T_t f\|_{L^2} \le e^{\beta_0 t}\|f\|_{L^2}, \ \ \|\widehat{T}_t f\|_{L^2} \le e^{\beta_0 t}\|f\|_{L^2} 
\ \ {\rm for} \ f,g \in L^2(E), \ t>0
$$
such that their Laplace transforms $\{G_\beta\}$ and $\{\widehat{G}_{\beta}\}$ are determined  for $\beta >\beta_0$ by 
$$
G_\beta f, \widehat{G}_\beta f\in \dom, \ \ 
\form_\beta(G_\beta f, u)= \form_{\beta}(u, \widehat{G}_\beta f)=(f,u) \ \ {\rm for} \  f\in L^2(E), \ u \in \dom.
$$
The semigroup $\{T_t; t>0\}$ is said to be {\it Markovian} if $0\le T_t f \le 1, \ t>0$ whenever $f\in L^2(E)$ with $0\le f\le 1$.
Then it is known that $\{T_t;t>0\}$ is Markovian if and only if the following condition is satisfied:

\begin{itemize}
\item[$(\form.4)$] {\sf (Markov property):} For all $u\in D(\mathscr{E})$ and $a\geq0$, $u\land a\in D(\mathscr{E})$ and $\mathscr{E}(u\land a,u-u\land a)\geq0$. 
\end{itemize}
A lower bounded closed form $(\form, \dom)$ on $L^2(E)$ satisfying $(\form.4)$ is called  {\it a lower bounded semi-Dirichlet form} on $L^2(E)$. 

A lower bounded closed form (resp.  lower bounded semi-Dirichlet form) on $L^2(E)$ associated with a parameter $\beta_0 \ge 0$ is simply called a {\it closed form} (resp. {\it semi-Dirichlet form}) {\it with the lower bound  $-\beta_0$}. In the case where $\beta_0 = 0$, we omit the parameter and refer to it as a {\it nonnegative closed form} (resp. {\it nonnegative semi-Dirichlet form}).

\smallskip
We say that a  closed form $(\form, \dom)$ on $L^2(E)$ with a lower bound  $-\beta_0$ admits a {\it core} $\mathscr{C}$ if $\mathscr{C}$ is a linear subspace of $\dom \cap C_0(E)$ such that $\mathscr{C}$ is dense both in $\dom$ with respect to the $\form_\beta$-norm for $\beta > \beta_0$ and in $C_0(E)$ with respect to the uniform norm $\|\cdot\|_\infty$. Here, $C_0(E)$ denotes the space of all continuous functions on $E$ with compact support. 

It is worth noting that while the existence of a core is defined here for general closed forms, if $(\form, \dom)$ happens to be a (semi-)Dirichlet form, this requirement exactly coincides with the standard definition of being {\it regular}.

Now we briefly recall the notion of capacity and quasi-continuity associated with the semi-Dirichlet form $(\mathscr{E}, D(\mathscr{E}))$. 
Following \cite{O13}, for any open set $A \subset E$, we set 
$\mathscr{L}_{A} := \{u \in D(\mathscr{E}) \mid u \ge 1\; \mathfrak{m}\text{-a.e. on } A\}$. 
If $\mathscr{L}_{A} \neq \emptyset$, there exist a unique $\alpha$-equilibrium potential $e^\alpha_A \in \mathscr{L}_{A}$ 
and an $\alpha$-coequilibrium potential $\widehat{e}^\alpha_A \in \mathscr{L}_{A}$. 
We define the $\alpha$-capacity of $A$ by 
$\text{Cap}^{(\alpha)}(A) := \mathscr{E}_\alpha(e^\alpha_A, \widehat{e}^\alpha_A)$ 
(and $\infty$ if $\mathscr{L}_{A} = \emptyset$). 
For an arbitrary subset $B \subset E$, its capacity is defined by 
$\text{Cap}^{(\alpha)}(B) := \inf \{ \text{Cap}^{(\alpha)}(A) \mid A \text{ is open, } A \supset B\}$, 
which is known to be a Choquet capacity.

A statement is said to hold quasi-everywhere (q.e.) if it holds outside a set of capacity zero. 
A function $u$ on $E$ is called quasi-continuous if for any $\varepsilon > 0$, 
there exists an open set $G \subset E$ with $\text{Cap}^{(\alpha)}(G) < \varepsilon$ 
such that the restriction of $u$ to $E \setminus G$ is continuous. 
Under our regularity assumption, every $u \in D(\mathscr{E})$ admits a quasi-continuous $\mathfrak{m}$-version.  
Hereafter, we always assume that every element $u \in \dom$ is replaced by its quasi-continuous $\mathfrak{m}$-version.

\bigskip

Assuming that the closed form $(\form, \dom)$ on $L^2(E)$ with the lower bound  $-\beta_0$ admits a core $\mathscr{C}$, we next introduce the class of measures of finite energy integrals associated 
with this framework. We remark that we do not assume the Markov property $(\form.4)$. Let $\mathscr{B}(E)$ be the $\sigma$-algebra of all Borel sets on $E$.  
A positive Radon measure $\nu$ on $E$ is said to be a measure of  {\it finite energy integral} (or to belong to the class $\mathsf{S}_0 = \mathsf{S}_0(\form)$) 
if for any $\beta > \beta_0$, there exist a constant $C > 0$  such that
\begin{equation}\label{eq:S0-def}
\int_E |v(x)| \, \nu(dx) \le C \sqrt{\form_\beta(v,v)}=C \sqrt{\form^{(s)}_\beta(v,v)} \quad \text{for all } v \in \dom \cap C_0(E).
\end{equation}

According to the Lax--Milgram theorem, for each $\nu \in \mathsf{S}_0$ and $\beta > \beta_0$, there exists a unique element $U_\beta \nu \in \dom$ (called the {\it potential} of $\nu$) and a unique element $\widehat{U}_\beta \nu \in \dom$ (called the {\it co-potential} of $\nu$) satisfying the following relations for all $v \in \dom \cap C_0(E)$:
\begin{equation} \label{S_0}
\form_\beta(U_\beta \nu, v) = \form_\beta(v, \widehat{U}_\beta \nu) = \int_E v(x) \, \nu(dx).
\end{equation}

  Note that while the integral $\int_{E}^{ }v d\nu$ depends on the pointwise values of $v$
  (which is why we assume a quasi-continuous version for a general $v \in \dom$),
  the inner product relation $\form_\beta(U_\beta \nu, v) = \form_\beta(v, \widehat{U}_\beta \nu)$
  is free from such pointwise modifications.

Therefore, by the denseness of the core $\mathscr{C}$ in $\dom$ with respect to the $\form_\beta$-norm, the first equality in \eqref{S_0} successfully extends to the entire domain without any ambiguity:
\begin{equation} \label{S_0-0}
\form_\beta(U_\beta \nu, v) = \form_\beta(v, \widehat{U}_\beta \nu) \quad \text{for all } v \in \dom.
\end{equation}
This extension establishes the foundational algebraic identities and topological properties of $\mathsf{S}_0$, summarized in the following proposition.

\begin{prop}\label{prop:S_0-properties}
For any $\mu, \nu \in \mathsf{S}_0$ and $\beta > \beta_0$, the total accumulated energies coincide:
\begin{equation}\label{eq:macro-energy-coincidence}
\form_\beta(U_\beta \nu, U_\beta \nu) = \form_\beta(\widehat{U}_\beta \nu, \widehat{U}_\beta \nu) = \form_\beta(U_\beta \nu, \widehat{U}_\beta \nu),
\end{equation}
and the energy of the potential differences satisfies
\begin{equation}\label{eq:energy-difference-coincidence}
\form_\beta(U_\beta \mu - U_\beta \nu, U_\beta \mu - U_\beta \nu) = \form_\beta(\widehat{U}_\beta \mu - \widehat{U}_\beta \nu, \widehat{U}_\beta \mu - \widehat{U}_\beta \nu).
\end{equation}
Furthermore, the function $\rho_\beta \colon \mathsf{S}_0 \times \mathsf{S}_0 \to [0, \infty)$ defined by
\begin{equation}\label{eq:metric-def}
\rho_\beta(\mu, \nu) := \sqrt{\form_\beta (U_\beta \mu - U_\beta \nu, U_\beta \mu - U_\beta \nu)} = \sqrt{\form_\beta (\widehat{U}_\beta \mu - \widehat{U}_\beta \nu, \widehat{U}_\beta \mu - \widehat{U}_\beta \nu)}
\end{equation}
constitutes a metric on $\mathsf{S}_0$. This metric satisfies the following equivalence relation for any $\alpha, \beta > \beta_0:$
  \begin{equation}\label{eq:metric-equivalence}
\bigg(K_\beta \sqrt{1 \vee \frac{\beta-\beta_0}{\alpha-\beta_0}} \bigg)^{-1} \rho_\alpha(\mu, \nu) \le \rho_\beta(\mu, \nu)
\le  \bigg(K_\alpha\sqrt{1 \vee \frac{\alpha-\beta_0}{\beta-\beta_0}} \bigg) \rho_\alpha(\mu, \nu). 
\end{equation}
In particular, the topology induced by $\rho_\beta$ is independent of the choice of $\beta > \beta_0$, and $(\mathsf{S}_0, \rho_\beta)$ forms a Polish space.
\end{prop}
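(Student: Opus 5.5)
The energy identities follow from \eqref{S_0-0} and \eqref{S_0}, applied with a judicious choice of test function. First take $v=U_\beta\nu$ in \eqref{S_0-0}; this gives $\form_\beta(U_\beta\nu,U_\beta\nu)=\form_\beta(U_\beta\nu,\widehat U_\beta\nu)$. For the other half we need the symmetric counterpart of \eqref{S_0-0}, namely $\form_\beta(U_\beta\nu,v)=\form_\beta(v,\widehat U_\beta\nu)$ read with $v=\widehat U_\beta\nu$; both sides are again given by \eqref{S_0-0}, since it holds for all $v\in\dom$. Setting $v=\widehat U_\beta\nu$ yields $\form_\beta(U_\beta\nu,\widehat U_\beta\nu)=\form_\beta(\widehat U_\beta\nu,\widehat U_\beta\nu)$, which proves \eqref{eq:macro-energy-coincidence}.

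For \eqref{eq:energy-difference-coincidence}, linearity of $\nu\mapsto U_\beta\nu$ and $\nu\mapsto\widehat U_\beta\nu$ (by uniqueness in Lax--Milgram) gives
\[
\form_\beta(U_\beta\mu-U_\beta\nu,v)=\form_\beta(v,\widehat U_\beta\mu-\widehat U_\beta\nu)
\]
for all $v\in\dom$. Choosing $v=U_\beta\mu-U_\beta\nu$ and then $v=\widehat U_\beta\mu-\widehat U_\beta\nu$ repeats the previous argument. This gives the coincidence, so $\rho_\beta$ is well defined.

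The metric axioms follow from $\rho_\beta(\mu,\nu)=\|U_\beta\mu-U_\beta\nu\|_{\form_\beta}$, which is the pullback of a Hilbert norm under $\nu\mapsto U_\beta\nu$. Symmetry and the triangle inequality are then immediate. For definiteness, suppose $U_\beta\mu=U_\beta\nu$. Then $\int v\,d\mu=\int v\,d\nu$ for all $v$ in the core $\mathscr C$. Since $\mathscr C$ is uniformly dense in $C_0(E)$ and $\mu,\nu$ are Radon, we get $\mu=\nu$.

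For \eqref{eq:metric-equivalence}, note that $w:=U_\alpha\mu-U_\alpha\nu$ corresponds to the signed measure $\mu-\nu$. We compute
\[
\rho_\beta^2=\form_\beta(U_\beta(\mu-\nu),U_\beta(\mu-\nu))=\int U_\beta(\mu-\nu)\,d(\mu-\nu)=\form_\alpha\big(U_\beta(\mu-\nu),\widehat U_\alpha(\mu-\nu)\big).
\]
This holds with $v=U_\beta(\mu-\nu)$ substituted into the $\alpha$-version of \eqref{S_0}; the extension from the core to all of $\dom$ is justified by quasi-continuity, which I expect to be the most delicate point. Applying $(\form.2)'$ at level $\alpha$ gives
\[
\rho_\beta^2\le K_\alpha\,\form_\alpha(U_\beta(\mu-\nu),U_\beta(\mu-\nu))^{1/2}\,\rho_\alpha .
\]
Here we used $\form_\alpha(\widehat U_\alpha(\cdot),\widehat U_\alpha(\cdot))=\rho_\alpha^2$. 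By \eqref{form-top}, $\form_\alpha(u,u)\le(1\vee\frac{\alpha-\beta_0}{\beta-\beta_0})\form_\beta(u,u)$, so dividing by $\rho_\beta$ gives the upper bound. The lower bound follows by swapping $\alpha$ and $\beta$. Hence the topology does not depend on $\beta$.

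For the Polish property, $\nu\mapsto U_\beta\nu$ is an isometry into the separable Hilbert space $\dom$; separability holds because $E$ is separable and $\dom$ has a core. Its image is therefore separable. Completeness requires showing that the image is closed. Take $U_\beta\nu_n\to u$ in $\dom$ and define $\ell(v):=\form_\beta(v,\widehat u)$, where $\widehat u$ is the $\form_\beta$-limit of $\widehat U_\beta\nu_n$. This sequence is Cauchy by \eqref{eq:energy-difference-coincidence}. The functional $\ell$ is a positive linear functional on $\mathscr C$, being the limit of $\int v\,d\nu_n$. By the Riesz–Markov theorem, extended via density to $C_0(E)$ through nonnegative core approximation, it is represented by a Radon measure $\nu$. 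The bound \eqref{eq:S0-def} passes to the limit, so $\nu\in\mathsf S_0$ with $U_\beta\nu=u$. The main obstacle is this closedness step, in particular producing a positive functional on all of $C_0(E)$ from its values on the core; the fallback is to take vague limits of the $\nu_n$, which are locally bounded uniformly in $n$, and identify the limit using the uniform bound on $\int|v|\,d\nu_n$.
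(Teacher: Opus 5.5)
Your energy identities are obtained exactly as in the paper. Your proof of \eqref{eq:metric-equivalence} is a dual variant of the paper's argument and gives the same constants. With $\xi=\mu-\nu$, the paper approximates $U_\beta\xi$ by $v_n\in\mathscr C$, writes $\rho_\beta(\mu,\nu)^2=\lim_n\form_\beta(U_\beta\xi,v_n)=\lim_n\int v_n\,d\xi=\lim_n\form_\alpha(U_\alpha\xi,v_n)$, and applies $(\form.2)'$ with $U_\alpha\xi$ in the first slot. You instead pair $U_\beta\xi$ with $\widehat U_\alpha\xi$ and use \eqref{eq:energy-difference-coincidence} to identify $\form_\alpha(\widehat U_\alpha\xi,\widehat U_\alpha\xi)=\rho_\alpha(\mu,\nu)^2$.

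The step to repair is your middle term $\int U_\beta\xi\,d\xi$ and the appeal to quasi-continuity. Proposition~\ref{prop:S_0-properties} is stated for closed forms with a core and explicitly without the Markov property. In that generality the capacity and quasi-continuous versions, which the paper sets up only for semi-Dirichlet forms, are not available, so your justification does not cover the statement. The step is also unnecessary. For $v\in\mathscr C$ you have $\form_\beta(U_\beta\xi,v)=\int v\,d\xi=\form_\alpha(v,\widehat U_\alpha\xi)$. Both outer expressions are continuous in $v$ for the energy norm by $(\form.2)'$ and \eqref{form-top}, so $\form_\beta(U_\beta\xi,v)=\form_\alpha(v,\widehat U_\alpha\xi)$ holds on all of $\dom$, and you may take $v=U_\beta\xi$. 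This density step is exactly what the paper's sequence $v_n$ accomplishes; nothing pointwise is needed.

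For the metric axioms and the Polish property, the paper only refers to \cite[Proposition~3.6]{NTTU25}, using \eqref{eq:energy-difference-coincidence}, so your sketch goes beyond the text. Your primary closedness argument is incomplete as written. A functional that is positive on the uniformly dense subspace $\mathscr C$ is represented by a Radon measure only if you can approximate with supports inside a fixed compact set, and uniform density alone gives no such control. The same caveat applies to your definiteness step, unless you localize inside $\dom\cap C_0(E)$.

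Your fallback is the clean route:
\begin{itemize}
\item $\sup_n\form_\beta(U_\beta\nu_n,U_\beta\nu_n)<\infty$ gives $\sup_n\nu_n(K)<\infty$ for each compact $K$. To see this, test against a nonnegative $\phi\in\dom\cap C_0(E)$ with $\phi\ge 1$ on $K$, since $\int\phi\,d\nu_n=\form_\beta(U_\beta\nu_n,\phi)$.
\item Hence a subsequence $\nu_{n_k}$ converges vaguely to some Radon measure $\nu$.
\item Part (ii) of the proposition immediately following Proposition~\ref{prop:S_0-properties} then gives $\nu\in\mathsf S_0$ and $U_\beta\nu_{n_k}\to U_\beta\nu$ weakly in $\dom$. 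That result does not rely on the Polish property, so there is no circularity.
\item Since $U_\beta\nu_n\to u$ strongly, $u=U_\beta\nu$, that is, $\rho_\beta(\nu_n,\nu)\to 0$.
\end{itemize}
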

\begin{proof}
The identities \eqref{eq:macro-energy-coincidence} and \eqref{eq:energy-difference-coincidence} follow immediately from the extended relation \eqref{S_0-0} by sequentially substituting $v = U_\beta \nu$, $v = \widehat{U}_\beta \nu$, and their linear combinations. 

For \eqref{eq:metric-equivalence} it suffices to show the upper bound, as the lower bound follows by exchanging the roles of $\alpha$ and $\beta$.
Fix $\mu, \nu \in \mathsf{S}_0$. Let $v := U_\beta \mu- U_\beta \nu \in \dom$. Since the core $\mathscr{C}$ is dense in $\dom$ with respect to the energy norm, there exists a sequence $\{v_n\} \subset \mathscr{C}$ such that $v_n \to v$ strongly in $(\dom, \form_\beta)$. Due to the norm equivalence (2.1), $v_n$ also converges to $v$ strongly in $(\dom, \form_\alpha)$.
By the continuity of the bilinear form and the definition of the potentials (2.3) for the test functions $v_n \in \mathscr{C}$, we have
\begin{align*}
\rho_\beta(\mu, \nu)^2 & = \form_\beta(U_\beta \mu- U_\beta \nu, v) = \lim_{n \to \infty} \form_\beta(U_\beta \mu- U_\beta \nu , v_n)  \\
&=  \lim_{n \to \infty} \Big(\int_E v_n d\mu- \int_E v_n d\nu \Big)= \lim_{n \to \infty} \form_\alpha(U_\alpha \mu-U_\alpha\nu, v_n).
\end{align*}
Applying the weak sector condition $(\form.2)'$ for $\form_\alpha$, we can estimate the right-hand side as
\begin{align*}
\big|\form_\alpha(U_\alpha \mu-U_\alpha \nu, v_n)\big| & \le K_\alpha \form_\alpha(U_\alpha \mu-U_\alpha \nu, U_\alpha \mu - U_{\alpha}\nu)^{1/2} \form_\alpha(v_n, v_n)^{1/2} \\
& = K_\alpha \rho_\alpha(\mu, \nu) \form_\alpha(v_n, v_n)^{1/2}.
\end{align*}
Using the form equivalence (2.1), we can bound the $\form_\alpha$-norm of $v_n$ by its $\form_\beta$-norm:
\begin{equation*}
\form_\alpha(v_n, v_n) \le \left(1 \vee \frac{\alpha-\beta_0}{\beta-\beta_0}\right) \form_\beta(v_n, v_n).
\end{equation*}
Taking the limit as $n \to \infty$, the strong convergence $v_n \to v$ in $\form_\beta$ yields $\form_\beta(v_n, v_n)^{1/2} \to \form_\beta(v, v)^{1/2} = \rho_\beta(\mu, \nu)$. Thus, we obtain
\begin{equation*}
\rho_\beta(\mu, \nu)^2 \le K_\alpha \sqrt{1 \vee \frac{\alpha-\beta_0}{\beta-\beta_0}} \rho_\alpha(\mu, \nu) \rho_\beta(\mu, \nu).
\end{equation*}
Dividing both sides by $\rho_\beta(\mu, \nu)$ yields the desired upper bound. The lower bound is obtained by entirely the same argument.
The other metric properties follow similarly to [NTTU25, Proposition 3.6] by taking \eqref{eq:energy-difference-coincidence} into account.
\end{proof}

\begin{rem}\rm
In the case of non-symmetric semi-Dirichlet forms, the convergence of the corresponding PCAFs when measures of finite energy integrals converge with respect to this distance \(\rho_\beta\) are investigated in our forthcoming paper \cite{KOTU}.
\end{rem}

According to the previous proposition,  we can show  the following  properties of the class $\mathsf{S}_0$ concerning vague and weak convergences  
similarly to \cite[Propositions 3.8 and 3.9]{NTTU25}. 

\begin{prop} \rm 
Take $\beta>\beta_0$ and fix it. Then the following properties hold for the class $\mathsf{S}_0$ and the metric $\rho_\beta$:  
\begin{enumerate}
\item[(i)] \textbf{(Vague convergence)} 

If $\rho_\beta(\mu_n, \mu) \to 0$ for $\mu_n, \mu \in \mathsf{S}_0$, then $\mu_n$ converges to $\mu$ vaguely.
\item[(ii)] \textbf{(Vague convergence with bounded energy implies weak convergence)}

If a sequence $\{\mu_n\} \subset \mathsf{S}_0$ is bounded in \(\rho_\beta\), namely $\sup_{n} \form_\beta(U_\beta \mu_n, U_\beta \mu_n) < \infty$, and converges vaguely to a Radon measure $\mu$, then $\mu \in \mathsf{S}_0$ and the potential $U_\beta \mu_n$ (resp.\ the co-potential $\widehat{U}_\beta \mu_n$) converges to $U_\beta \mu$ (resp.\ $\widehat{U}_\beta \mu$) weakly in the Hilbert space $(\dom, \form^{(s)}_\beta)$.
\end{enumerate}
\end{prop}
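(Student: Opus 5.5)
Both parts reduce to testing the defining relation \eqref{S_0} against a fixed $v\in\mathscr{C}$ and then using density of $\mathscr{C}$ in $C_0(E)$ and in $(\dom,\form^{(s)}_\beta)$.

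For (i), fix $v\in\mathscr{C}$. By \eqref{S_0} and the weak sector condition $(\form.2)'$,
\[
\Big|\int_E v\,d\mu_n-\int_E v\,d\mu\Big| = \big|\form_\beta(U_\beta\mu_n-U_\beta\mu, v)\big| \le K_\beta\,\rho_\beta(\mu_n,\mu)\,\form_\beta(v,v)^{1/2}\to 0 .
\]
To pass from $\mathscr{C}$ to an arbitrary $f\in C_0(E)$, I first need a uniform bound on the masses of the $\mu_n$ on compacts. For a compact $F$, I take $w\in\mathscr{C}$ with $w\ge 1$ on $F$; such a $w$ exists by uniform density of $\mathscr{C}$ in $C_0(E)$, taking an approximation of $2\varphi$ with $\varphi\in C_0(E)$, $0\le \varphi\le1$, $\varphi=1$ on a neighbourhood $G$ of $F$. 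I would rather not assume positivity of elements of $\mathscr{C}$, since the Markov property is not assumed. So I estimate $\mu_n(F)\le \int_E |w|\,d\mu_n$. Then
\[
\int_E |w|\,d\mu_n \le \int_E |w|\,d\mu + \Big|\int_E |w|\,d\mu_n - \int_E |w|\,d\mu\Big|.
\]
The difference is a problem because $|w|$ need not lie in $\dom$. I would instead use \eqref{eq:S0-def} through the potentials: by the extension \eqref{S_0-0}-type argument, $\int_E |v|\,d\mu_n$ is controlled for $v\in\dom\cap C_0(E)$. More precisely, the best constant in \eqref{eq:S0-def} for $\mu_n$ is comparable to $\form_\beta(U_\beta\mu_n,U_\beta\mu_n)^{1/2}$, at least for nonnegative $v$, and for general $v$ via quasi-continuous versions. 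This constant is bounded because $\rho_\beta(\mu_n,0)\le \rho_\beta(\mu_n,\mu)+\rho_\beta(\mu,0)$. This gives $\sup_n\mu_n(F)<\infty$ on a neighbourhood of each compact. With $\operatorname{supp}f\subset F$, approximating $f$ uniformly by $v\in\mathscr{C}$ multiplied by a cutoff, or simply choosing $v\in\mathscr{C}$ close to $f$ and handling the tails outside $G$ by the same mass bound, then yields $\int f\,d\mu_n\to\int f\,d\mu$. This is the argument of \cite[Prop.~3.8]{NTTU25}.

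For (ii), I set $M:=\sup_n\form_\beta(U_\beta\mu_n,U_\beta\mu_n)$. By \eqref{eq:macro-energy-coincidence}, the co-potentials are bounded by the same $M$. The sequences $\{U_\beta\mu_n\}$ and $\{\widehat U_\beta\mu_n\}$ are therefore bounded in the Hilbert space $(\dom,\form^{(s)}_\beta)$. Any subsequence then has a further subsequence with $U_\beta\mu_{n'}\rightharpoonup u$ and $\widehat U_\beta\mu_{n'}\rightharpoonup \hat u$ weakly. For $v\in\mathscr{C}$ the maps $w\mapsto\form_\beta(w,v)$ and $w\mapsto\form_\beta(v,w)$ are continuous linear functionals. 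Combining this with vague convergence gives
\[
\form_\beta(u,v)=\lim\int_E v\,d\mu_{n'}=\int_E v\,d\mu=\form_\beta(v,\hat u).
\]
To show $\mu\in\mathsf{S}_0$, the bound $|\int_E v\,d\mu_n|\le K_\beta M^{1/2}\form_\beta(v,v)^{1/2}$ passes to the limit, but it only controls $\int v\,d\mu$ and not $\int|v|\,d\mu$. The main obstacle is therefore upgrading to $|v|$. My plan is to work first with $v\ge0$. For general $v\in\dom\cap C_0(E)$, I would use that $\int|v|\,d\mu_n$ is bounded by $C\form_\beta(v,v)^{1/2}$ uniformly in $n$. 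This needs the lemma, used in (i), that the $\mathsf{S}_0$-constant of $\nu$ is dominated by a multiple of $\rho_\beta(\nu,0)$. Then lower semicontinuity of $\nu\mapsto\int|v|\,d\nu$ under vague convergence for continuous $|v|\ge0$ with compact support (it is actually continuous) gives \eqref{eq:S0-def} for $\mu$.

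Finally, by uniqueness in Lax--Milgram, $u=U_\beta\mu$ and $\hat u=\widehat U_\beta\mu$, with density of $\mathscr{C}$ used to identify them. Since every subsequence has a further subsequence with these same limits, the whole sequences converge weakly.
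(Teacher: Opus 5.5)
Your convergence on $\mathscr C$ in (i) is correct, and your Banach--Alaoglu/subsequence argument in (ii) is the paper's own. The gap is the lemma the rest of your proof relies on: that the constant in \eqref{eq:S0-def} for $\nu$ is at most a multiple of $\rho_\beta(\nu,0)$. You do not prove it. The identity \eqref{S_0} only controls $|\int_E v\,d\nu|$. Passing to $\int_E|v|\,d\nu$ requires $|v|$ (or $v^{\pm}$) to be an admissible test function with controlled energy, and quasi-continuous versions have nothing to do with this. In the generality you insist on (no $(\form.4)$) the lemma is false, and so is (i) itself. Take $E=\real$ and $\form(u,v)=\int u'v'\,dx+\int x^2uv\,dx$ on $\dom=\{v\in H^1(\real): xv\in L^2(\real),\ \int v\,dx=0\}$, with core $\{v\in C_0^\infty(\real):\int v\,dx=0\}$. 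This core is uniformly dense in $C_0(\real)$: subtract from a smooth approximant a small multiple of a wide flat bump. This is a closed symmetric form. Lebesgue measure $\lambda$ satisfies $\int|v|\,d\lambda\le\sqrt{\pi}\,\form_1(v,v)^{1/2}$, so $\lambda\in\mathsf S_0$. But $\int v\,d\lambda=0$ on $\dom\cap C_0(\real)$, so $U_\beta\lambda=0$ and $\rho_\beta(2\lambda,\lambda)=0$ although $2\lambda\neq\lambda$. Your localization step (``cutoff'' or ``tails outside $G$'') fails for the same reason. $\dom$ is not an algebra, and uniform approximants from the core need not have supports in a fixed compact set; here they cannot, since $\int v\,dx=0$.

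The missing ingredient is the Markov property $(\form.4)$, which the argument of \cite{NTTU25} (to which the paper defers for (i)) uses. With it, $v\in\dom\cap C_0(E)$ implies $|v|\in\dom\cap C_0(E)$ and $\form_\beta(|v|,|v|)\le\form_\beta(v,v)$ by \cite[(1.1.12)]{O13}. Hence $\int_E|v|\,d\nu=\form_\beta(U_\beta\nu,|v|)\le K_\beta\,\rho_\beta(\nu,0)\,\form_\beta(v,v)^{1/2}$, which is your lemma, and it gives $\sup_n\mu_n(F)<\infty$. For localization, suppose $v\in\mathscr C$ and $\|f-v\|_\infty<\varepsilon$. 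Then $\tilde v:=(v-v\wedge\varepsilon)-\big((-v)-(-v)\wedge\varepsilon\big)$ lies in $\dom\cap C_0(E)$, satisfies $\|f-\tilde v\|_\infty<2\varepsilon$, and has $\operatorname{supp}\tilde v\subset\operatorname{supp}f$. Together with your first step applied to $\tilde v$, this finishes (i). In (ii) the upgrade from $\int_E v\,d\mu$ to $\int_E|v|\,d\mu$ is then immediate: test the limit identity $\form_\beta(u,w)=\int_E w\,d\mu$, valid for all $w\in\dom\cap C_0(E)$, with $w=|v|$. You were right to flag this point. The paper's proof of (ii) asserts $\mu\in\mathsf S_0$ directly from $\form_\beta(u,v)=\int_E v\,d\mu$ without addressing it.
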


\begin{proof}
Since the proof of (i) is quite similar to that of \cite[Proposition 3.8]{NTTU25}, we omit it. 

For the second statement, the proof requires a slight modification because of the non-symmetry. Assume that $\sup_n \form_\beta(U_\beta \mu_n, U_\beta \mu_n) < \infty$ and $\mu_n \to \mu$ vaguely. Since the diagonal components coincide with $\form^{(s)}_\beta(u, u) = \form_\beta(u, u)$ for any $u \in \dom$, the sequence $\{U_\beta \mu_n\}$ is bounded in the Hilbert space $(\dom, \form^{(s)}_\beta)$. By the Banach-Alaoglu theorem, there exist a subsequence $\{\mu_{n_k}\}$ and a function $u \in \dom$ such that $U_\beta \mu_{n_k}$ converges weakly to $u$ in $(\dom, \form^{(s)}_\beta)$. Due to the  weak sector condition, the maps $w \mapsto \form_\beta(w, v)$ and $w \mapsto \form_\beta(v, w)$ are continuous linear functionals on $(\dom, \form^{(s)}_\beta)$ for each fixed $v \in \dom$. Hence, the weak convergence implies $\form_\beta(U_\beta \mu_{n_k}, v) \to \form_\beta(u, v)$ for all $v \in \dom$. In particular, for any test function $v \in \dom \cap C_0(E)$, the vague convergence yields
\begin{equation*}
\form_\beta(u, v) = \lim_{k \to \infty} \form_\beta(U_\beta \mu_{n_k}, v) = \lim_{k \to \infty} \int_E v d\mu_{n_k} = \int_E v d\mu.
\end{equation*}
This identity ensures that $\mu \in \mathsf{S}_0$ and $u = U_\beta \mu$. Since the limit $U_\beta \mu$ is uniquely determined independently of the choice of the subsequence, the entire sequence $U_\beta \mu_n$ converges weakly to $U_\beta \mu$ in $(\dom, \form^{(s)}_\beta)$. By the exact same argument utilizing the total energy coincidence $\form_\beta(\widehat{U}_\beta \mu_n, \widehat{U}_\beta \mu_n) = \form_\beta(U_\beta \mu_n, U_\beta \mu_n)$ and the relation $\form_\beta(v, \widehat{U}_\beta \mu_n) = \int_E v d\mu_n$, the weak convergence of the co-potentials $\widehat{U}_\beta \mu_n \to \widehat{U}_\beta \mu$ is obtained equivalently.
\end{proof}

  Next, we introduce the notion of smooth measures. A positive Borel measure $\nu$ on $E$ is said to be a \textit{smooth measure} 
if it charges no set of zero capacity and there exists a generalized nest $\{F_n\}$ 
of closed sets such that $\nu(F_n) < \infty$ for all $n \in \mathbb{N}$. 
We denote the family of all smooth measures by $\mathsf{S}$. 
By \cite{O13}, any measure in $\mathsf{S}_0$ charges no set of zero capacity, 
which implies $\mathsf{S}_0 \subset \mathsf{S}$.

Under the regularity of the semi-Dirichlet form $(\mathscr{E}, D(\mathscr{E}))$, 
there exists a Hunt process $\mathbb{M} = (\Omega, (X_t)_{t\geq0}, (\mathbb{P}_x)_{x\in E})$ associated with it. 
For any smooth measure $\nu \in \mathsf{S}$,
there exists a unique positive continuous additive functional
(PCAF in short, see \cite[\S 4.1]{O13} for the definition)
$\mathsf{A}^\nu$ of $\mathbb{M}$ under the Revuz correspondence:
$$
\lim_{t \downarrow 0} \frac{1}{t} \int_E \mathbb{E}_x \left[ \int_0^t f(X_s) d{\mathsf A}^\nu_s \right] \widehat{h}(x) \mathfrak{m}(dx) 
= \int_E f(x) \widehat{h}(x) \nu(dx)
$$
for any $\gamma$-coexcessive function $\widehat{h} \in L^1(E)$ ($\gamma > 0$) 
and 
  $f\in \mathscr{B}_b(E)$ with $f\ge 0$. 
Furthermore, for $\nu \in \mathsf{S}_0$ and $\alpha > \beta_0$, the probabilistic potential defined by
$$
R_\alpha \nu(x) := \mathbb{E}_x \left[ \int_0^\infty e^{-\alpha t} d{\mathsf A}^\nu_t \right]
$$
gives a quasi-continuous $m$-version of the analytic potential $U_\alpha \nu$.

If the dual form $(\widehat{\mathscr{E}}, D(\mathscr{E}))$ also satisfies the Markov property, 
there exists a dual Hunt process $\widehat{\mathbb{M}} = (\widehat{\Omega}, (\widehat{X}_t)_{t\geq0}, (\widehat{\mathbb{P}}_x)_{x\in E})$ associated with it. 
In this case, for any $\nu \in \mathsf{S}$, we can similarly define the dual PCAF $\widehat{\mathsf{A}}^\nu$ of $\widehat{\mathbb{M}}$
under the dual Revuz correspondence. 
Its probabilistic co-potential $\widehat{R}_\alpha \nu(x) := \widehat{\mathbb{E}}_x \left[ \int_0^\infty e^{-\alpha t} d\widehat{\mathsf{A}}^\nu_t \right]$ 
gives a quasi-continuous $m$-version of the analytic co-potential $\widehat{U}_\alpha \nu$.

It is worth emphasizing that while the condition \eqref{eq:S0-def} for a measure belonging to $\mathsf{S}_0$ depends on the symmetric part $\form^{(s)}_\beta$, the actual point-wise structures of the potential $U_\beta \nu$ and the co-potential $\widehat{U}_\beta \nu$ are heavily asymmetric due to the underlying non-symmetry. Nevertheless, as we have just established in \eqref{eq:macro-energy-coincidence}, a cancellation occurs at the global energy level, keeping the total magnitudes identical.

To see where the microscopic discrepancy begins to manifest, it is highly instructive to look at the formal integral representations. Although evaluating the strict pointwise values under highly singular measures generally poses a severe problem regarding the choice of quasi-continuous versions, one can formally rewrite the total energy coincidence in the following mixed-integral form:
\begin{equation}\label{eq:energy-coincidence-pure}
\form_\beta(U_\beta \nu, U_\beta \nu) = \form_\beta(\widehat{U}_\beta \nu, \widehat{U}_\beta \nu) = \int_E \widehat{U}_\beta \nu(x) \, \nu(dx) + \int_E U_\beta \nu(x) \, \nu(dx) - \form_\beta(U_\beta \nu, \widehat{U}_\beta \nu).
\end{equation}
Heuristically, it is precisely within the fine structures of these localized integral terms that the pointwise discrepancy between the right and left potentials becomes explicit. This structural tension naturally motivates the concrete examples and applications presented below.

\subsection{Example for Spatial Asymmetry of Bounded Potentials and Co-potentials}

\smallskip
Let $D = \{|x| < R\}$ be the open ball of radius $R > 0$ centered at the origin in $\mathbb{R}^d$ with $d \ge 3$. 
Let $B(x)$ be a vector potential defined as
\begin{equation}\label{eq:ex-drift}
B(x) := \frac{cx}{|x|^2} \quad \text{for } x \in D \setminus \{0\} \quad \text{with}  \ \  c>0.
\end{equation}
Define a bilinear form $\form(u,v)$ by 
\begin{equation} \label{ex:form1}
\form(u,v):= \int_D \nabla u(x) \cdot \nabla v(x)dx + \int_D B(x) \cdot \nabla u(x) v(x)  \, dx, \quad 
u, v \in C_0^\infty(D),
\end{equation}
where \( C_0^\infty(D)\) is the space of all infinitely differentiable function with compact support. We first show this bilinear form $\form$  is not just well-defined for $C_0^\infty(D)$ but  also produces a nonnegative semi-Dirichlet 
form having $H^1_0(D)$ as its domain on $L^2(D)$: 
\begin{lem}
For $d \ge 3$ and $0 < c < \frac{d-2}2$, the bilinear form $\form$ defined in \eqref{ex:form1} is nonnegative semi-Dirichlet form with $H^1_0(D)$ as its domain on $L^2(D)$.
\end{lem}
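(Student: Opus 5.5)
The plan is to check $(\form.1)$--$(\form.4)$ with $\beta_0=0$ on $C_0^\infty(D)$ and then pass to $H^1_0(D)$ by density. The main tool is Hardy's inequality on $\mathbb{R}^d$, $d\ge3$:
\[
\int_D \frac{u(x)^2}{|x|^2}\,dx \le \Big(\frac{2}{d-2}\Big)^2 \int_D |\nabla u|^2\,dx, \qquad u\in C_0^\infty(D).
\]

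\textbf{Well-definedness and strong sector condition.} By Cauchy--Schwarz and Hardy,
\[
\Big|\int_D B\cdot\nabla u\, v\,dx\Big| \le c\,\|\nabla u\|_{L^2}\,\Big\|\frac{v}{|x|}\Big\|_{L^2} \le \frac{2c}{d-2}\,\|\nabla u\|_{L^2}\|\nabla v\|_{L^2}.
\]
Hence $\form$ extends continuously to $H^1_0(D)\times H^1_0(D)$. On the diagonal, integration by parts gives
\[
\int_D B\cdot\nabla u\,u\,dx=\frac12\int_D B\cdot\nabla(u^2)\,dx=-\frac12\int_D(\operatorname{div}B)\,u^2\,dx,
\]
where $\operatorname{div}B=c(d-2)/|x|^2$ in the distributional sense on $D$. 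This holds because $d\ge3$: $B\in L^1_{loc}$, and the boundary term on a small sphere $|x|=\varepsilon$ is $O(\varepsilon^{d-2})\to0$. So
\[
\form(u,u)=\|\nabla u\|^2-\frac{c(d-2)}2\int\frac{u^2}{|x|^2}\,dx \ \ge\ \Big(1-\frac{2c}{d-2}\Big)\|\nabla u\|^2 .
\]
Since $0<c<\frac{d-2}{2}$, the constant $\delta:=1-\frac{2c}{d-2}$ is positive. This gives nonnegativity and the comparability $\delta\|\nabla u\|^2\le \form(u,u)\le\|\nabla u\|^2$. The bound above then yields $|\form(u,v)|\le K\sqrt{\form_1(u,u)\form_1(v,v)}$ with $K=(1+\frac{2c}{d-2})/\delta$.

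\textbf{Closedness and density.} By the comparability, the norm $\form^{(s)}_\beta(u,u)^{1/2}$ is equivalent to the $H^1$-norm, and $C_0^\infty(D)$ is dense. So the closure of $(\form, C_0^\infty(D))$ is exactly $H^1_0(D)$, which is complete, and $(\form.3)$ holds. All identities extend from $C_0^\infty(D)$ to $H^1_0(D)$ by continuity.

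\textbf{Markov property.} This is the step I expect to require the most care. For $u\in H^1_0(D)$ and $a\ge0$, we have $u\wedge a\in H^1_0(D)$ with $\nabla(u\wedge a)=\nabla u\,\mathbf 1_{\{u<a\}}$ and $\nabla(u-u\wedge a)=\nabla u\,\mathbf 1_{\{u>a\}}$. Hence the gradient term of $\form(u\wedge a,u-u\wedge a)$ vanishes, and the drift term $\int B\cdot\nabla(u\wedge a)\,(u-a)^+dx$ also vanishes, because its integrand is supported in $\{u<a\}\cap\{u>a\}=\emptyset$. Thus $\form(u\wedge a,u-u\wedge a)=0\ge0$. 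The delicate points are (a) justifying that the drift term makes sense for general $H^1_0$ functions via the Hardy bound, rather than only for smooth ones; and (b) noticing that it is this ordering that works. The other ordering, $\form(u-u\wedge a,u\wedge a)$, would involve $\int B\cdot\nabla(u-a)^+\,(u\wedge a)$, which need not vanish. This is why we only obtain a semi-Dirichlet form and not a Dirichlet form. Finally, regularity follows since $C_0^\infty(D)$ is a core.
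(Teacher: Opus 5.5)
Your proof is correct and follows the paper's route: integration by parts with $\operatorname{div}B=c(d-2)|x|^{-2}$ and Hardy's inequality give the equivalence of $\form(u,u)$ with $\mathbb{D}(u,u)$, closedness follows from completeness of $H^1_0(D)$, and $(\form.4)$ is checked on truncations. You also make explicit the off-diagonal Hardy bound behind the sector condition, which the paper leaves implicit; note that your constant $K$ already works with $\form$ itself, as $(\form.2)$ with $\beta_0=0$ requires, and not only with $\form_1$.

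Your Markov step is cleaner than the paper's. The drift term in $\form(u\wedge a,u-u\wedge a)$ vanishes identically because the supports are disjoint, so the sign $\operatorname{div}B\ge 0$ that the paper invokes plays no role there. That sign is instead what makes the other ordering, and hence the dual form, fail to be Markovian, exactly as you observe.
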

\begin{proof}
Take $u \in C_0^\infty(D)$. Noting that ${\rm div} B(x)=c(d-2)|x|^{-2}$, we see that 
\begin{align*}
\int_D B(x) \cdot \nabla u(x) \, u(x)dx  & =\frac 12\int_D B(x) \cdot \nabla (u^2(x))dx   =- \frac 12 \int_D\Big( {\rm div} B(x)  \Big)u(x)^2 dx  \\
& =-\frac{c(d-2)}2 \int_D \frac{u(x)^2}{|x|^2}dx.
\end{align*}
By making use of Hardy's inequality, $\frac{(d-2)^2}4 \int_D \frac{u(x)^2}{|x|^2}dx \le \int_D |\nabla u(x)|^2dx=:{\mathbb D}(u,u)$, it follows 
$$
\int_D B(x) \cdot \nabla u(x) \, u(x)dx \ge -\frac{2c}{d-2} \int_D |\nabla u(x)|^2dx=-\frac{2c}{d-2}   {\mathbb D}(u,u), 
$$
and then, putting $0<c<\frac{d-2}2$,  we see that 
\begin{equation} \label{hardy1}
\Big(1-\frac{2c}{d-2}\Big){\mathbb D}(u,u) \le \form(u,u) \le \Big(1+\frac{2c}{d-2}\Big) {\mathbb D}(u,u).
\end{equation}
This implies that the form $\form$ is not just nonnegative definite but is equivalent to the Dirichlet integral \(\mathbb{D}\).

Furthermore, the inequality established above immediately yields the closedness of the form $(\form, H^1_0(D))$. 
Let $\form_1(u,u) := \form(u,u) + \|u\|_{L^2}^2$. By combining \eqref{ex:form1} with the bound \eqref{hardy1}, it is straightforward to see that there exist positive constants $C_1, C_2 > 0$ such that
\begin{equation}\label{eq:norm-equivalence}
C_1 \mathbb{D}_1(u,u) \le \form_1(u,u) \le C_2 \mathbb{D}_1(u,u), \quad \forall u \in H^1_0(D),
\end{equation}
where $\mathbb{D}_1(u,u) := \|\nabla u\|_{L^2}^2 + \|u\|_{L^2}^2$ denotes the standard Dirichlet (Sobolev) norm. Since $H^1_0(D)$ is complete under this norm, the equivalence \eqref{eq:norm-equivalence} implies that $(\form, H^1_0(D))$ is a  nonnegative definite  closed form on $L^2(D)$. 

In addition to the closedness, the form $\form$ satisfies the Markov property $(\form.4)$.  Indeed, as ${\rm div} \, B =c(d-2)|x|^{-2} \ge  0$, controls the non-symmetric perturbation distributively, by the integration by parts formula, it follows that the contraction inequality
$$
\form(u^{\#}, u-u^{\#})\ge 0
$$
holds for all $u \in H^1_0(D)$, \(a\geq 0\), and $u^{\#} := u\wedge a$. Therefore, $(\form, H^1_0(D))$ is verified to be a nonnegative semi-Dirichlet form on $L^2(D)$. 
\end{proof}
We now consider a singular Borel measure 
$$
\nu(dx):=|x|^{-1}dx.
$$ 
We will prove that $\nu \in \mathsf{S}_0$ and its potential $U_1\nu$ is in $L^\infty(D)$, but its co-potential $\widehat{U}_1\nu$ is not in $L^\infty(D)$.

To verify that $\nu$ is a measure of finite energy integrals, we evaluate its action on any test function $v \in H_0^1(D)\cap C_0$. By the Cauchy--Schwarz inequality and Hardy's inequality, we have
\begin{align*}
\int_D |v(x)| \nu(dx) &= \int_D |v(x)| |x|^{-1} dx \\
&\le \left( \int_D \frac{v(x)^2}{|x|^2} dx \right)^{1/2} \left( \int_D 1 dx \right)^{1/2} \\
&\le \frac{2}{d-2} \|\nabla v\|_{L^2(D)} {\sf vol}(D)^{1/2}, 
\end{align*}
where ${\sf vol}(D)$ is the volume of $D$. Since $\form_1(v, v) \ge \form(v, v) \ge \left( 1 - \frac{2c}{d-2} \right) \|\nabla v\|_{L^2(D)}^2$, there exists a constant $C > 0$ independent of $v$ such that $\int_D |v| d\nu \le C \sqrt{\form_1(v, v)}$. Thus, $\nu \in \mathsf{S}_0$.

\smallskip
The 1-potential $u = U_1\nu \in H_0^1(D)$ satisfies the equation 
$$
-\Delta u + B \cdot \nabla u + u = |x|^{-1}
$$ 
in the weak sense. Due to the radial symmetry of the domain and the drift, the potential $u$ can be represented as a radial function $u(r)$ with $r = |x|$, which satisfies the ordinary differential equation:
\begin{equation}\label{eq:ode-radial}
u''(r) + \frac{d-1-c}{r} u'(r) - u(r) = -\frac{1}{r}, \quad 0 < r < R.
\end{equation}
To precisely analyze the asymptotic behavior and the singularity of the solution $u(r)$ near the origin, we rewrite \eqref{eq:ode-radial} in the Sturm-Liouville form:
\begin{equation}\label{eq:sturm-liouville}
\left( r^{d-1-c} u'(r) \right)' = r^{d-1-c} \left( u(r) - r^{-1} \right).
\end{equation}
By integrating \eqref{eq:sturm-liouville} from a small radius $r$ to a fixed $\varepsilon > 0$, we find that the general solution for the derivative $u'(r)$ near the origin must satisfy
\begin{equation}\label{eq:derivative-asymptotic}
u'(r) = C_0 r^{c-d+1} - \frac{1}{d-1-c} + o(1) \quad \text{as } r \to 0,
\end{equation}
where $C_0$ is an integration constant associated with the homogeneous part of the leading differential operator. 

We now test the admissibility of the leading singular term $C_0 r^{c-d+1}$ within the energy space. The condition $u \in H_0^1(D)$ requires the gradient to be square-integrable with respect to the speed measure near the origin, which translates to
\begin{equation}\label{eq:energy-integrability}
\int_0^\varepsilon |u'(r)|^2 r^{d-1} \, dr < \infty.
\end{equation}
Substituting the asymptotic expansion \eqref{eq:derivative-asymptotic} into \eqref{eq:energy-integrability}, the square-integrability of the singular component holds if and only if
$$
\int_0^\varepsilon \left( C_0 r^{c-d+1} \right)^2 r^{d-1} \, dr = C_0^2 \int_0^\varepsilon r^{2c-d+1} \, dr < \infty.
$$
This integral converges if and only if $2c - d + 1 > -1$, which simplifies directly to the parameter constraint $c > \frac{d-2}{2}$. However, under our fundamental assumption for the lower boundedness of the form, the parameter is strictly bounded by $0 < c < \frac{d-2}{2}$. This produces a complete contradiction unless the coefficients of the singular homogeneous solution vanish identically, enforcing $C_0 = 0$.

Consequently, the contribution of the singular homogeneous solution is eliminated. The asymptotic behavior of the derivative is uniquely governed by the particular solution component, yielding
$$
u'(r) = -\frac{1}{d-1-c} + o(1) \quad \text{as } r \to 0.
$$
Integrating this relation once more from $0$ to $r$ demonstrates that $u(r)$ is Lipschitz continuous in a neighborhood of the origin, which guarantees that $\lim_{r \to 0} u(r) < \infty$. Hence, we conclude that the $1$-potential stays uniformly bounded, establishing $U_1\nu \in L^\infty(D)$.

\smallskip
As for the co-potential $w = \widehat{U}_1\nu \in H_0^1(D)$, it satisfies the adjoint equation 
$$
-\Delta w - \text{div}(Bw) + w = |x|^{-1}.
$$ 
Since $\text{div}(Bw) = B \cdot \nabla w + w \, \text{div}B = \frac{c}{r} w' + \frac{c(d-2)}{r^2} w$, the radial profile $w(r)$ satisfies:
\begin{align*}
w''(r) + \frac{d-1+c}{r} w'(r) + \frac{c(d-2)}{r^2} w(r) - w(r) = -\frac{1}{r}.
\end{align*}
The principal part near $r=0$ is the Euler equation $w'' + \frac{d-1+c}{r} w' + \frac{c(d-2)}{r^2} w = 0$. By substituting $w(r) = r^\lambda$, 
we obtain the characteristic equation $\lambda^2 + (d-2+c)\lambda + c(d-2) = 0$, which factors as $(\lambda+c)(\lambda+d-2) = 0$. 
Thus, the fundamental solutions to the leading Euler operator are given by $r^{-c}$ and $r^{-(d-2)}$.

To analyze the true asymptotic behavior of the co-potential $w(r)$ near the origin, we multiply the full adjoint ODE by the integrating factor $r^{d-1+c}$ 
and rewrite it into the following Sturm-Liouville structure:
\begin{equation}
(r^{d-1+c} w'(r))' + c(d-2)r^{d-3+c} w(r) = r^{d-1+c} (w(r) - r^{-1}). \label{eq:w_sturm}
\end{equation}
Treating the right-hand side of \eqref{eq:w_sturm} as a lower-order perturbation and taking into account the fundamental solutions $r^{-c}$ and $r^{-(d-2)}$ of the homogeneous part, the general solution for the derivative $w'(r)$ in a neighborhood of the origin can be expressed via the asymptotic expansion:
\begin{equation}
w'(r) = C_1 r^{-c-1} + C_2 r^{-(d-2)-1} + O(1) \quad \text{as } r \to 0, \label{eq:w_deriv}
\end{equation}
where $C_1$ and $C_2$ are integration constants.

We now examine the square-integrability of each component under the energy norm constraint required for $w \in H_0^1(D)$, namely, $\int_0^\varepsilon |w'(r)|^2 r^{d-1} \, dr < \infty$. 
\begin{itemize}
    \item For the most singular component involving $C_2$, we see that
    $$
    \int_0^\varepsilon \left( r^{-(d-2)-1} \right)^2 r^{d-1} \, dr = \int_0^\varepsilon r^{-d+1} \, dr = \infty
    $$
    diverges {
      algebraically for $d \ge 3$ (logarithmically for $d=2$).}
    Therefore, to ensure $w \in H_0^1(D)$, the coefficient of this higher-order singularity must vanish identically, enforcing $C_2 = 0$.
    
    \item Next, we test the remaining homogeneous component involving $C_1$. The square integrability condition near the origin requires
    \begin{equation}\label{eq:co-energy-test}
    \int_0^\varepsilon \left( C_1 r^{-c-1} \right)^2 r^{d-1} \, dr = C_1^2 \int_0^\varepsilon r^{d-2c-3} \, dr < \infty.
    \end{equation}
    The integral \eqref{eq:co-energy-test} converges if and only if $d - 2c - 3 > -1$, which simplifies directly to the parameter range $c < \frac{d-2}{2}$.
\end{itemize}

Crucially, the condition $c < \frac{d-2}{2}$ is \textit{exactly} our fundamental assumption that guarantees the lower boundedness of the bilinear form $\form$. Consequently, unlike the case for the potential, the singular component $C_1 r^{-c}$ is indeed admissible within the energy space $H_0^1(D)$, and the constant $C_1$ cannot be assumed to be zero.

Integrating the leading derivative term $w'(r) \sim C_1 r^{-c-1}$ from $r$ to $\varepsilon > 0$, we immediately obtain the asymptotic profile of the co-potential near the origin:
\begin{equation}\label{eq:co-potential-divergence}
w(r) \sim -\frac{C_1}{c} r^{-c} \quad \text{as } r \to 0.
\end{equation}

Note that $w$ must be non-negative as a co-potential of a positive measure, which forces $C_1 < 0$. Since $c > 0$, the expression \eqref{eq:co-potential-divergence} strictly blows up to $+\infty$ at the origin.
We therefore conclude that the $1$-co-potential is not uniformly bounded, establishing that $\widehat{U}_1\nu \notin L^\infty(D)$.

\section{Stollmann--Voigt's inequality for semi-Dirichlet forms}\label{sec_SVineq}
In this section, we establish a non-symmetric version of Stollmann--Voigt's inequality. Throughout this section, we consider a regular semi-Dirichlet form 
$(\form, D(\form))$ on $L^2(E)$ with the lower bound $-\beta_0\leq0$ admitting a core $\mathscr{C}$. Let $\mathbb{M}=(\Omega, (X_t)_{t\geq0}, (\mathbb{P}_x)_{x\in E})$ be the associated Hunt process with $(\form, D(\form))$ (see \cite[\S3.3]{O13}). 
We only assume the weak sector condition $(\form.2)'$ for 
$(\form, D(\form))$.

\begin{theorem}[Stollmann--Voigt inequality] \label{thm:StollmannVoigt} 
  For any $\alpha>\beta_0$, $\mu\in {\sf S}_{0}$ and $f\in D(\form)$, we have 
 \begin{align*} 
 \int_Ef^2d\mu\leq K_{\alpha}^2\|U_{\alpha}\mu\|_{\infty}^{1/2} \|\widehat{U}_{\alpha}\mu\|_{\infty}^{1/2} \form_{\alpha}(f,f). 
 \end{align*} 
\end{theorem}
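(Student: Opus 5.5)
Our approach is to reduce the inequality, by duality, to an operator-norm estimate for a Riesz/Schur-type pairing between $\mu$ and the form. By density and Fatou, it suffices to take $f\in\mathscr{C}$; we may also assume $f$ bounded with compact support, since $f^2\wedge n$ is handled by truncation and the Markov property $(\form.4)$. We may further assume $\|U_\alpha\mu\|_\infty,\|\widehat U_\alpha\mu\|_\infty<\infty$, since otherwise there is nothing to prove.

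The key identity comes from testing the defining relation \eqref{S_0} with a product. For bounded $f\in\dom\cap C_0(E)$, the function $f^2$ is not directly usable, so we instead write
\[
\int_E f^2\,d\mu=\int_E f\cdot f\,d\mu .
\]
We then consider the measure $f\mu$ (signed) and show that $f\mu\in\mathsf{S}_0-\mathsf{S}_0$, with potential $U_\alpha(f\mu)$ and co-potential $\widehat U_\alpha(f\mu)$. Then
\[
\int f^2d\mu=\form_\alpha\big(U_\alpha(f\mu),f\big)\le K_\alpha\,\form_\alpha\big(U_\alpha(f\mu),U_\alpha(f\mu)\big)^{1/2}\form_\alpha(f,f)^{1/2}
\]
by $(\form.2)'$. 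By Proposition~\ref{prop:S_0-properties}, or directly from \eqref{S_0-0},
\[
\form_\alpha\big(U_\alpha(f\mu),U_\alpha(f\mu)\big)=\form_\alpha\big(U_\alpha(f\mu),\widehat U_\alpha(f\mu)\big)=\int f\,\widehat U_\alpha(f\mu)\,d\mu .
\]
It therefore remains to bound $\int f\,\widehat U_\alpha(f\mu)\,d\mu$ by something like $K_\alpha^2\|U_\alpha\mu\|_\infty^{1/2}\|\widehat U_\alpha\mu\|_\infty^{1/2}\int f^2d\mu$. Once that is done, dividing through gives the claim with the constant $K_\alpha^2$.

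The main step is this Schur-test bound. We use the probabilistic representation: $\widehat U_\alpha(g\mu)$ is given by a kernel $\widehat R_\alpha(x,dy)$ acting on $g\mu$, and $U_\alpha\mu(y)=\int \widehat R_\alpha(x,dy)\ldots$. Concretely, via Revuz duality, the bilinear quantity $\int g\,\widehat U_\alpha(h\mu)\,d\mu=\int h\,U_\alpha(g\mu)\,d\mu$ is given by a positive kernel $k(x,y)$ on $E\times E$ with respect to $\mu\otimes\mu$. Its row integrals are $\int k(x,y)\mu(dy)=U_\alpha\mu(x)$ and its column integrals are $\int k(x,y)\mu(dx)=\widehat U_\alpha\mu(y)$, with $U_\alpha\mu(x)=\mathbb E_x\int_0^\infty e^{-\alpha t}d\mathsf A^\mu_t$. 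The Schur test then gives operator norm at most $\|U_\alpha\mu\|_\infty^{1/2}\|\widehat U_\alpha\mu\|_\infty^{1/2}$ on $L^2(\mu)$:
\[
\Big|\int\!\!\int f(x)k(x,y)f(y)\,\mu(dx)\mu(dy)\Big|\le \Big(\int f^2 U_\alpha\mu\,d\mu\Big)^{1/2}\Big(\int f^2\widehat U_\alpha\mu\,d\mu\Big)^{1/2}.
\]
Since the dual process need not exist (the dual semigroup is only positivity preserving), we would rather realize $k$ analytically. One option is to approximate $\mu$ by $\mu_n=n\,\widehat{\ }$-type measures $g_n\,\mathfrak m$ with $g_n=n(U_\alpha\mu-nG_{\alpha+n}U_\alpha\mu)$, for which $U_\alpha\mu_n=nG_{\alpha+n}U_\alpha\mu\le U_\alpha\mu$. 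Then $k$ is the resolvent density $G_\alpha$, which is positivity preserving, so Schur applies to $(g_n\mathfrak m)$. We then pass to the limit via the vague/energy convergence of Proposition 2.2.

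We expect this approximation step to be the main obstacle: simultaneously controlling $U_\alpha\mu_n\le U_\alpha\mu$ and $\widehat U_\alpha\mu_n\le C\,\widehat U_\alpha\mu$ in a non-symmetric setting. The extra factor $K_\alpha$, appearing squared, is meant to absorb the losses from the weak sector condition in this comparison and in the identification of the limit. If needed, we fall back on excessive-function arguments, using the fact that $U_\alpha\mu$ and $\widehat U_\alpha\mu$ are $\alpha$-excessive and $\alpha$-coexcessive respectively, to justify the domination.
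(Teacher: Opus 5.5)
Your reduction to $f\in\mathscr C$ is sound: an $\form_\alpha$-convergent sequence has a q.e.\ convergent subsequence, $\mu$ charges no set of zero capacity, and Fatou finishes. Your first step is also sound. Testing \eqref{S_0} with the signed measure $f\mu$ and applying $(\form.2)'$ once gives
\[
\int_E f^2\,d\mu\le K_\alpha\Big(\int_E f\,U_\alpha(f\mu)\,d\mu\Big)^{1/2}\form_\alpha(f,f)^{1/2},
\]
where $\int_E f\,U_\alpha(f\mu)\,d\mu=\form_\alpha(U_\alpha(f\mu),U_\alpha(f\mu))$ equals your $\int_E f\,\widehat U_\alpha(f\mu)\,d\mu$. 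This is the same duality mechanism as in the paper: the paper takes a supremum over $\phi$ and pairs with $\widehat U_\alpha(|\phi|\mu)$, while you take $\phi=f$ directly.

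The whole content, however, is the Schur-type bound $\int_E f\,U_\alpha(f\mu)\,d\mu\le\|U_\alpha\mu\|_\infty^{1/2}\|\widehat U_\alpha\mu\|_\infty^{1/2}\int_E f^2\,d\mu$, and you do not prove it. You assert a kernel $k(x,y)$ with respect to $\mu\otimes\mu$ without justification, since no resolvent density is assumed. You then drop it for the approximation $\mu_n=g_n\mathfrak m$, which needs $\widehat U_\alpha\mu_n\le C\,\widehat U_\alpha\mu$. You give no argument for that comparison, and there is no evident mechanism for it: $nG_{\alpha+n}$ is a forward-resolvent operation and says nothing about co-potentials when the form is non-symmetric. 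You flag this step yourself as the main obstacle, so the core estimate is left open.

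The missing observation is that no dual process is needed. The forward Hunt process $\mathbb M$ exists. Cauchy--Schwarz for the random measure $e^{-\alpha s}\,d\mathsf A^\mu_s$ under $\mathbb P_x$ gives, pointwise, $R_\alpha(|f|\mu)^2\le R_\alpha(f^2\mu)\,R_\alpha\mu$, which is the paper's \eqref{eq:3.1}. This is the row half of Schur's test. The column half is purely analytic:
\[
\int_E U_\alpha(f^2\mu)\,d\mu=\form_\alpha\big(U_\alpha(f^2\mu),\widehat U_\alpha\mu\big)=\int_E f^2\,\widehat U_\alpha\mu\,d\mu .
\]
Both $\|\cdot\|_\infty$ bounds transfer q.e., hence $\mu$-a.e., by quasi-continuity. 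Therefore
\[
\int_E |f|\,R_\alpha(|f|\mu)\,d\mu\le\|U_\alpha\mu\|_\infty^{1/2}\Big(\int_E f^2\,d\mu\Big)^{1/2}\Big(\int_E f^2\,\widehat U_\alpha\mu\,d\mu\Big)^{1/2}\le\|U_\alpha\mu\|_\infty^{1/2}\|\widehat U_\alpha\mu\|_\infty^{1/2}\int_E f^2\,d\mu ,
\]
which, with $|R_\alpha(f\mu)|\le R_\alpha(|f|\mu)$, completes your argument.

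Your bookkeeping of constants is also off. This step costs no $K_\alpha$; the final $K_\alpha^2$ comes entirely from squaring the single use of $(\form.2)'$. If the Schur step carried a factor $K_\alpha^2$ as you suggest, dividing through would give $K_\alpha^4$, not the stated constant.
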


\begin{proof}
Without loss of generality, we may assume that $\mu\in {\sf S}_{0}$ satisfies $U_{\alpha} \mu,\ \widehat{U}_{\alpha}\mu \in L^\infty(E)$. By using the monotone convergence theorem, it is enough to show the inequality 
for a bounded nonnegative $f\in D(\form)$. Indeed, if the assertion 
holds for such a bounded nonnegative element in $\dom$, 
it holds for $|f|\land n\in \dom$ for 
all $n\in\mathbb{N}$ and $f\in\dom$. By $(\mathscr{E}.4)$ and \cite[(1.1.12)]{O13}, 
$\mathscr{E}_{\alpha}(|f|\land n,|f|\land n)\leq K_{\alpha}^2\mathscr{E}_{\alpha}(f,f)$, i.e., $\{|f|\land n\}_{n\in\mathbb{N}}$ is $\mathscr{E}_{\alpha}$-bounded. 
From this, we obtain the assertion for general $f\in\dom$, by way of the Banach--Saks theorem.
 For $\beta>0$, we set 
$f_{\beta}:=\beta R_{\beta+\alpha}f$. We easily confirm $f_{\beta}\to f$ in $\form_{\alpha}^{1/2}$-norm and in $L^2(\mu)$ thanks to the boundedness of $f$.

By the preparations in Section \ref{sec_preparation}, for $\mu \in \mathsf{S}_0$, there uniquely exists the associated PCAF ${\sf A}^\mu$ of $\mathbb{M}$. 
We set $R_{\alpha}(h\mu)(x):=\mathbb{E}_x\left[\int_0^{\infty}e^{-\alpha s}h(X_s)d{\sf A}_s^{\mu} \right]$ for  $h\in\mathscr{B}(E)_+$. Then $R_{\alpha}\mu$ is a quasi-continuous $\mathfrak{m}$-version of $U_{\alpha}\mu$. Moreover, the probabilistic potential $R_{\alpha}(h\mu)$ can be pointwise bounded by the Cauchy-Schwarz inequality:
\begin{align}
\left(R_{\alpha}(h\mu)(x)\right)^2&=\left(\mathbb{E}_x\left[\int_0^{\infty}e^{-\alpha s}h(X_s)d{\sf A}_s^{\mu} \right]\right)^2\nonumber\\
&\leq \mathbb{E}_x\left[\int_0^{\infty}e^{-\alpha s}h^2(X_s)d{\sf A}_s^{\mu} \right] \mathbb{E}_x\left[\int_0^{\infty}e^{-\alpha s}d{\sf A}_s^{\mu} \right] \label{eq:3.1}
\\
&= R_{\alpha}(h^2\mu)(x)R_{\alpha}\mu(x).\nonumber
\end{align}
Now, noting that $R_\alpha(\beta(f - \beta R_{\beta+\alpha}f)) = \beta R_{\beta+\alpha}f = f_\beta$, we set $g_\beta := \beta(f - f_\beta)$ so that $f_\beta = R_\alpha g_\beta$. To estimate the $L^2(\mu)$-norm of $R_\alpha g_\beta$, we use the duality expression:
\begin{align*}
\label{eq:1}
\int_E f_\beta^2 d\mu &= \int_E (R_\alpha g_\beta)^2 d\mu = 
\sup\left\{\left. \left(\int_E R_\alpha g_\beta\, \phi\, d\mu \right)^2\;\right|\;
\phi\in L^2(\mu)\cap L^{\infty}(\mu),\|\phi\|_{L^2(\mu)} \le 1 
\right\}. 
\end{align*}
By the weak sector condition $(\form.2)'$,
the energy coincidence for potentials (Proposition \ref{prop:S_0-properties}),
and the monotonicity of potentials, we have for any
$\phi \in L^2(\mu) \cap L^{\infty}(\mu)$ with $\|\phi\|_{L^2(\mu)} \le 1$:
\begin{align*}
\left( \int_E R_\alpha g_\beta \,\phi\, d\mu \right)^2&\leq 
\left( \int_E R_\alpha g_\beta \,|\phi|\, d\mu \right)^2\\
&= \form_\alpha(R_\alpha g_\beta, \widehat{U}_\alpha(|\phi|\,\mu))^2 \\
&\le K_\alpha^2 \form_\alpha(R_\alpha g_\beta, R_\alpha g_\beta) \form_\alpha(U_\alpha(|\phi|\,\mu), U_\alpha(|\phi|\,\mu)) \\[5pt]
&= K_\alpha^2 (g_\beta, R_\alpha g_\beta) \langle |\phi|\,\mu, R_\alpha(|\phi|\,\mu) \rangle.
\end{align*}
For the last factor on the right-hand side, applying the Schwarz inequality \eqref{eq:3.1} with $h=|\phi|$ and utilizing the duality and the boundedness of the potentials, we can further bound it as follows:
\begin{align*}
\langle |\phi|\,\mu, R_\alpha(|\phi|\,\mu) \rangle
&\le \left( \int_E (R_\alpha(|\phi|\,\mu))^2 d\mu \right)^{1/2} \left( \int_E \phi^2 d\mu \right)^{1/2} \\
&\le \left( \int_E R_\alpha(\phi^2 \mu) R_\alpha\mu d\mu \right)^{1/2} \cdot 1 \\
&\le \|U_\alpha\mu\|^{1/2}_\infty  \left( \int_E \phi^2  \widehat{U}_\alpha\mu d\mu \right)^{1/2} \\
&\le \|U_\alpha\mu\|^{1/2}_\infty \|\widehat{U}_\alpha\mu\|^{1/2}_\infty.
\end{align*}
Therefore, we find that
$$ \int_E f_\beta^2 d\mu \le K_\alpha^2 \|U_\alpha\mu\|^{1/2}_\infty \|\widehat{U}_\alpha\mu\|^{1/2}_\infty (g_\beta, R_\alpha g_\beta) $$
holds for any $\beta > 0$.

Since $f_{\beta}\to f$ in $L^2(\mu)$ as $\beta\to\infty$, the left-hand side converges to $\int_Ef^2d\mu$. On the other hand, the last factor in the right-hand side is equal to $\beta(f-f_{\beta},f_{\beta})$, which converges to $\form_{\alpha}(f,f)$ by 
\cite[Theorem~1.1.4(ii)]{O13} for the form $\form_{\alpha}$ (not for $\form$). 
Note that \cite[Theorem~1.1.4(ii)]{O13} remains valid under $(\form.2)'$.  
\end{proof}

  As prepared in Section \ref{sec_preparation}, if the dual form $(\widehat{\form}, \dom)$
  also satisfies the Markov property, we can directly use
  the probabilistic representations $R_{\alpha} \mu$ and $\widehat{R}_{\alpha} \mu$ 
  for any smooth measure $\mu \in \mathsf{S}$.
  This allows us to state the following corollary.

\begin{cor}\label{cor:nonSymDirStollmannVoigt}
Suppose further that $(\form.4)$ is satisfied for $(\widehat{\form},D(\form))$. For $\mu\in{\sf S}$ and $f\in D(\form)$, we have that for $\alpha>\beta_0$
\begin{align*}
\int_Ef^2d\mu\leq K_{\alpha}^2\|R_{\alpha}\mu\|_{\infty}^{1/2}\|\widehat{R}_{\alpha}\mu\|_{\infty}^{1/2}\form_\alpha(f,f).
\end{align*}
\end{cor}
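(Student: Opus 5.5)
We will deduce Corollary \ref{cor:nonSymDirStollmannVoigt} from Theorem \ref{thm:StollmannVoigt} by approximating a general smooth measure from inside by measures in ${\sf S}_0$, using the probabilistic potentials. We may assume $\|R_\alpha\mu\|_\infty$ and $\|\widehat{R}_\alpha\mu\|_\infty$ are finite, since otherwise there is nothing to prove. Set $M:=\|R_\alpha\mu\|_\infty$ and $\widehat M:=\|\widehat{R}_\alpha\mu\|_\infty$.

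First take a generalized nest $\{F_n\}$ of closed sets with $\mu(F_n)<\infty$, as in the definition of $\mathsf{S}$. Further intersect with compact sets: fix an increasing sequence of compacts $K_n\uparrow E$ and put $\mu_n:=1_{F_n\cap K_n}\mu$. Then $\mu_n$ is a finite measure with compact support charging no set of zero capacity. For any Borel $B$ we have ${\sf A}^{1_B\mu}=\int 1_B(X_s)\,d{\sf A}^\mu_s$, and similarly for the dual PCAF, so
\[
R_\alpha\mu_n\le R_\alpha\mu\le M \quad\text{and}\quad \widehat{R}_\alpha\mu_n\le \widehat{R}_\alpha\mu\le \widehat M \quad\text{q.e.}
\]

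The main step is to show $\mu_n\in{\sf S}_0$, with $R_\alpha\mu_n$ and $\widehat{R}_\alpha\mu_n$ being quasi-continuous versions of $U_\alpha\mu_n$ and $\widehat{U}_\alpha\mu_n$. We will argue as in the symmetric theory, i.e.\ the statement that a finite-energy, bounded potential comes from a measure in ${\sf S}_0$. A measure $\nu$ that is finite, charges no exceptional set, and has $R_\alpha\nu$ bounded belongs to ${\sf S}_0$: for $v\in\mathscr{C}$ with $v\ge0$ and $h\in L^2$, the Revuz correspondence gives
\[
\int v\,d\nu=\lim_{\beta\to\infty}\beta\int \widehat{R}_{\alpha+\beta}\,\cdots
\]
Directly, one can bound $\int v\,d\nu$ via $\int \widehat G_\alpha(\cdot)\,d\nu$. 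Concretely, for $g\in L^2$ with $g\ge0$, Revuz duality yields
\[
\int_E G_\alpha g\,d\nu=(g,\widehat{R}_\alpha\nu).
\]
Since $\widehat{R}_\alpha\nu\le\widehat M$ and $\nu$ is finite, one checks that $\widehat{R}_\alpha\nu\in L^1\cap L^\infty\subset L^2$ and that it is $\alpha$-coexcessive. A coexcessive $L^2$ function that is dominated in this way lies in $D(\form)$ with
\[
\form_\alpha(\widehat{R}_\alpha\nu,\widehat{R}_\alpha\nu)\le \widehat M\,\nu(E).
\]
This follows from the standard estimate $\beta(\widehat{R}_\alpha\nu-\beta\widehat{G}_{\alpha+\beta}\widehat{R}_\alpha\nu,\,\widehat{R}_\alpha\nu)\le \int\widehat{R}_\alpha\nu\,d\nu$ together with Oshima's characterization of $D(\form)$. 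Hence $\int G_\alpha g\,d\nu\le C\,\form_\alpha(G_\alpha g,G_\alpha g)^{1/2}$, and by density of $\{G_\alpha g\}$ together with the weak sector condition we get \eqref{eq:S0-def}. The same argument with the roles of $R$ and $\widehat R$ exchanged gives the identification of the potential. Verifying these membership and identification claims carefully in the non-symmetric setting, under only $(\form.2)'$, is the step I expect to be the main obstacle. If a direct proof is awkward, I would cite the corresponding results in \cite{O13} (\S4), where smooth measures with bounded potentials are shown to be of finite energy integral.

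Once this is in place, Theorem \ref{thm:StollmannVoigt} applied to $\mu_n$ gives
\[
\int_E f^2\,d\mu_n\le K_\alpha^2 M^{1/2}\widehat M^{1/2}\form_\alpha(f,f).
\]
Since $F_n\cap K_n$ increases to a set whose complement is $\mu$-null (because $\mu$ charges no exceptional set and $\{F_n\}$ is a generalized nest), monotone convergence yields the claim, using the quasi-continuous version of $f$.
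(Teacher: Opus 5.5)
Your proposal is essentially the paper's proof. Both restrict $\mu$ to an increasing sequence of compact sets from a nest, so that the restrictions lie in ${\sf S}_0$ with bounded potential and co-potential; apply Theorem~\ref{thm:StollmannVoigt}; dominate using $R_\alpha(\mathds{1}_{K_n}\mu)\le R_\alpha\mu$ and $\widehat{R}_\alpha(\mathds{1}_{K_n}\mu)\le\widehat{R}_\alpha\mu$; and let $n\to\infty$ by monotone convergence. The only difference is that the paper simply invokes the existence of such a common generalized nest, whereas you derive the ${\sf S}_0$-membership of the restrictions from the harmless assumption $\|\widehat{R}_\alpha\mu\|_\infty<\infty$.

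In that derivation, the per-$\beta$ bound by $\int\widehat{R}_\alpha\nu\,d\nu$ is not justified without symmetry, since the forward resolvent $\beta G_{\alpha+\beta}$ need not decrease a coexcessive function; but $\beta\int G_{\alpha+\beta}\widehat{R}_\alpha\nu\,d\nu\le\widehat{M}\,\nu(E)$ holds and is all you need. Also, the passage from $\{G_\alpha g\}$ to general $v$ is best done with nonnegative approximants (e.g.\ $\beta G_{\alpha+\beta}|v|$) and Fatou's lemma, not by density alone.
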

\begin{proof}
Since there exists a common generalized nest $\{K_n\}$ of compact sets such that $\mathds{1}_{K_n}\mu\in {\sf S}_{0}$ and $U_{\alpha}(\mathds{1}_{K_{n}}\mu),\  \widehat{U}_{\alpha}(\mathds{1}_{K_n}\mu) \in L^\infty(E)$, we have from Theorem \ref{thm:StollmannVoigt} that
\begin{align*}
\int_Ef^2d\mu&= \lim_{n\to \infty} \int_{K_n}f^2d\mu\\
& \leq \varlimsup_{n\to \infty} K_{\alpha}^2\|R_{\alpha}(\mathds{1}_{K_n}\mu)\|_{\infty}^{1/2} \|\widehat{R}_{\alpha}(\mathds{1}_{K_n}\mu)\|_{\infty}^{1/2}\form_\alpha(f,f)\\
&\leq K_{\alpha}^2\|R_{\alpha}\mu\|_{\infty}^{1/2} \|\widehat{R}_{\alpha}\mu\|_{\infty}^{1/2}\form_\alpha(f,f).
\end{align*}
\end{proof}

In what follows in this section,
we assume the strong sector condition $(\form.2)$. 
Thanks to $(\form.2)$, we can define the notion of extended Dirichlet space 
$(\form, \dom_{e})$ (see \cite[p.~18]{O13} for the extended Dirichlet space). 
Moreover, we assume the transience of $(\form,D(\form))$
(see \cite[(1.3.7)]{O13} for the transience).  
In this case, we can consider the $0$-order version of measures of finite energy integrals:
A positive Radon measure $\nu$ on $E$ is said to be a
{\it measure of finite {\rm(}$0$-order{\rm)} energy integral} 
(denoted by $\nu\in {\sf S}_0^{(0)}$) 
if the inequality \eqref{eq:S0-def} holds with $\form_{\beta}$
on the right-hand side replaced by $\form$. Then the equation \eqref{S_0} 
with $\beta=0$ uniquely determines the functions   
$U\nu,\, \widehat{U}\nu\in D(\form)_{e}$.
The function $U\nu$ (resp.~$\widehat{U}\nu$) is called the
{\it {\rm(}$0$-order{\rm)} potential {\rm(}resp.~co-potential{\rm)} of $\nu$}.

\begin{cor}\label{cor:StollmannVoigt}
  Assume $\beta_0=0$. For any $\mu\in {\sf S}_{0}^{(0)}$ and $f\in D(\form)_e$, we have
\begin{align*}
\int_Ef^2d\mu\leq K^2\|U\mu\|_{\infty}^{1/2}\|\widehat{U}\mu\|_{\infty}^{1/2}\form(f,f).
\end{align*}
If we further assume that $(\widehat{\form},D(\form))$ satisfies $(\form.4)$, then for any $\mu\in {\sf S}$ and $f\in D(\form)_e$, we have
\begin{align*}
\int_Ef^2d\mu\leq K^2\|R\mu\|_{\infty}^{1/2}\|\widehat{R}\mu\|_{\infty}^{1/2}\form(f,f),
\end{align*}
where $R\mu(x)=\mathbb{E}_x[{\sf A}_{\infty}^{\mu}]$ and $\widehat{R}\mu(x)=\widehat{\mathbb{E}}_x[\widehat{\sf A}_{\infty}^{\mu}]$.
\end{cor}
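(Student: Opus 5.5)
We plan to obtain Corollary \ref{cor:StollmannVoigt} from Theorem \ref{thm:StollmannVoigt} by letting $\alpha\downarrow 0$, followed by an approximation in the extended space.

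\emph{Step 1: comparison of $\alpha$-order and $0$-order objects.} Let $\mu\in{\sf S}_0^{(0)}$. Since $\form_\alpha(v,v)\ge \form(v,v)$, we have $\mu\in{\sf S}_0$ for every $\alpha>0$. We will show $0\le U_\alpha\mu\le U\mu$ and $0\le \widehat U_\alpha\mu\le \widehat U\mu$ q.e. The cleanest route is probabilistic: $R_\alpha\mu(x)=\mathbb{E}_x[\int_0^\infty e^{-\alpha t}d{\sf A}^\mu_t]$ increases to $R\mu(x)=\mathbb{E}_x[{\sf A}^\mu_\infty]$ as $\alpha\downarrow0$, and $R\mu$ is a version of $U\mu$ under transience. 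Analytically one can instead use the resolvent-type identity $U_\alpha\mu = U\mu-\alpha \widehat{\ }$... namely $\form_\alpha(U\mu-U_\alpha\mu,v)=\alpha(U\mu,v)$, so $U\mu-U_\alpha\mu=\alpha G_\alpha U\mu\ge0$ by positivity of $G_\alpha$. The dual statement $\widehat U\mu-\widehat U_\alpha\mu=\alpha\widehat G_\alpha \widehat U\mu\ge0$ uses only that $\widehat G_\alpha$ is positivity preserving, which holds for semi-Dirichlet forms without assuming $(\form.4)$ for the dual. This gives $\|U_\alpha\mu\|_\infty\le\|U\mu\|_\infty$ and $\|\widehat U_\alpha\mu\|_\infty\le\|\widehat U\mu\|_\infty$.

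\emph{Step 2: passing to the limit.} Under $(\form.2)$ with $\beta_0=0$ we have $|\form_\alpha(u,v)|\le |\form(u,v)|+\alpha|(u,v)|\le (K+\alpha/\ldots)$; more simply, the proof of Theorem \ref{thm:StollmannVoigt} only uses the sector constant through $|\form_\alpha(u,v)|\le K_\alpha \form_\alpha(u,u)^{1/2}\form_\alpha(v,v)^{1/2}$, and from $(\form.2)$ with $\beta_0=0$ one gets $|\form_\alpha(u,v)|\le K\form_\alpha(u,u)^{1/2}\form_\alpha(v,v)^{1/2}$ directly, since $|\form(u,v)|\le K\form(u,u)^{1/2}\form(v,v)^{1/2}$ and $K\ge1$ together with Cauchy--Schwarz absorb the $\alpha(u,v)$ term. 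Hence for $f\in D(\form)$,
\[
\int_E f^2d\mu\le K^2\|U\mu\|_\infty^{1/2}\|\widehat U\mu\|_\infty^{1/2}\form_\alpha(f,f),
\]
and letting $\alpha\downarrow0$ replaces $\form_\alpha(f,f)$ by $\form(f,f)$.

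\emph{Step 3: extension to $D(\form)_e$.} For $f\in D(\form)_e$, take an approximating sequence $f_n\in D(\form)$ that is $\form$-Cauchy with $f_n\to f$ $\mathfrak m$-a.e. By transience and the $0$-order theory in \cite{O13}, a subsequence converges q.e. to the quasi-continuous version of $f$, and $\form(f_n,f_n)\to\form(f,f)$. Since $\mu$ charges no polar set, Fatou's lemma yields the first inequality.

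\emph{Step 4: the second inequality.} For $\mu\in{\sf S}$, assume the dual form satisfies $(\form.4)$. Following Corollary \ref{cor:nonSymDirStollmannVoigt}, choose a nest $\{K_n\}$ with $\mathds 1_{K_n}\mu\in{\sf S}_0^{(0)}$ and bounded $0$-order potentials, where this is possible after also cutting off by transience. Apply the first part to each piece, use $R(\mathds 1_{K_n}\mu)\le R\mu$ and $\widehat R(\mathds 1_{K_n}\mu)\le\widehat R\mu$, and conclude by monotone convergence. If no suitable nest exists, the right-hand side is infinite and there is nothing to prove.

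\emph{Main obstacle.} The delicate point is Step 4: producing $\mathds 1_{K_n}\mu\in{\sf S}_0^{(0)}$ (not just ${\sf S}_0$). A cleaner alternative avoids this entirely: apply Corollary \ref{cor:nonSymDirStollmannVoigt} with the improved constant $K$ from Step 2, bound $R_\alpha\mu\le R\mu$ and $\widehat R_\alpha\mu\le\widehat R\mu$ pointwise, and let $\alpha\downarrow0$ followed by Step 3. We would adopt this alternative.
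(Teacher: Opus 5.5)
Your proposal is correct, but it takes a different route from the paper. The paper only says that the proof is very similar to that of Theorem~\ref{thm:StollmannVoigt}. In other words, the argument is to be re-run at order zero: the Cauchy--Schwarz bound \eqref{eq:3.1} for $0$-order PCAF potentials, duality against $\widehat U(|\phi|\mu)$ using the sector constant $K$ of $\form$ itself, and a $0$-order version of the approximation $f_\beta$ with $\beta(f-f_\beta,f_\beta)\to\form(f,f)$.

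You instead use Theorem~\ref{thm:StollmannVoigt} and Corollary~\ref{cor:nonSymDirStollmannVoigt} as black boxes and let $\alpha\downarrow0$. This yields the stated constant $K^2$ rather than $K_\alpha^2=(K+1)^2$ because of your observation that, for $\beta_0=0$, $\form_\alpha$ satisfies $(\form.2)'$ with constant $K$ (via $K\ge1$ and Cauchy--Schwarz in $\mathbb{R}^2$). The proof of the theorem only uses whatever weak-sector constant $\form_\alpha$ has.

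Your route buys two things:
\begin{itemize}
\item It avoids developing $0$-order resolvent calculus on $D(\form)_e$, where $f$ need not lie in $L^2$, so the analogue of \cite[Theorem~1.1.4(ii)]{O13} is not immediate.
\item In the second part it avoids producing a nest with $\mathds 1_{K_n}\mu\in{\sf S}_0^{(0)}$. Your final choice, Corollary~\ref{cor:nonSymDirStollmannVoigt} together with the trivial pointwise bounds $R_\alpha\mu\le R\mu$ and $\widehat R_\alpha\mu\le\widehat R\mu$, is the right one.
\end{itemize}
The price is a monotone comparison of (co-)potentials in $\alpha$, plus q.e.\ convergence of approximating sequences in $D(\form)_e$.

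One step needs tightening. In Step~1, the identities $\form_\alpha(U\mu-U_\alpha\mu,v)=\alpha(U\mu,v)$ and $\widehat U\mu-\widehat U_\alpha\mu=\alpha\widehat G_\alpha\widehat U\mu$ presuppose $U\mu,\widehat U\mu\in L^2(E;\mathfrak m)$. That typically fails; for example, Newtonian potentials in $\mathbb{R}^3$ decay like $|x|^{-1}$. On the dual side you also have no process to fall back on.

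A clean fix runs as follows.
\begin{itemize}
\item For $0<\alpha'<\alpha$ both co-potentials lie in $D(\form)$, so $\widehat U_{\alpha'}\mu-\widehat U_\alpha\mu=(\alpha-\alpha')\widehat G_\alpha\widehat U_{\alpha'}\mu\ge0$.
\item The ${\sf S}_0^{(0)}$ bound gives $\form_{\alpha'}(\widehat U_{\alpha'}\mu,\widehat U_{\alpha'}\mu)=\int\widehat U_{\alpha'}\mu\,d\mu\le C\form_{\alpha'}(\widehat U_{\alpha'}\mu,\widehat U_{\alpha'}\mu)^{1/2}$. Hence the family is bounded in $D(\form)_e$, and $\sqrt{\alpha'}\|\widehat U_{\alpha'}\mu\|_{L^2}\le C$.
\item Consequently $\alpha'(v,\widehat U_{\alpha'}\mu)\to0$, so every weak limit point in $D(\form)_e$ is $\widehat U\mu$.
\item Banach--Saks then gives $\widehat U_\alpha\mu\le\widehat U\mu$ a.e.
\end{itemize}
The same argument handles $U_\alpha\mu\le U\mu$ without appealing to $R\mu$.
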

\begin{proof}
The proof is very similar to the proof of Theorem~\ref{thm:StollmannVoigt}. We omit the details.
\end{proof}

\begin{rem}\label{rem:fitzsimmons-comparison}
In the setting of semi-Dirichlet forms, Fitzsimmons \cite{F01} established the Stollmann--Voigt inequality for any smooth measure $\mu \in \mathsf{S}$, using only the bounded co-potential $\widehat{U}_\alpha\mu$.  
For any $\alpha$-excessive function $u \in \dom$, by use of It\^o's formula, he proves the inequality with the constant $2$:
\begin{equation}
\int_E u^2 \, d\mu \le 2\|\widehat{U}_\alpha\mu\|_{\infty} \form_\alpha(u,u).\label{eq:Fit1}
\end{equation}
To extend this to a general function $f \in \dom$, one dominates $|f| \in \dom$ by its $\alpha$-reduced function $u_{|f|}$, which is an $\alpha$-excessive element in 
$\mathscr{L}_{|f|}:=\{v\in\dom\mid \tilde{v}\geq|\tilde{f}|\, {\rm q.e.}\}$
such that 
$\mathscr{E}_{\alpha}( u_{|f|},v-u_{|f|})\geq0$ for any $v\in \mathscr{L}_{|f|}$ (see \cite[(2.4.1),(2.4.2)]{O13}  and note that they remain valid for any functions in $\dom$ (in this case $|f|\in\dom$)).
Under the non-symmetric weak sector condition, applying \cite[(2.4.5)]{O13}, we have 
\begin{align*}
\int_E f^2\, d\mu\leq \int_E u_{|f|}^2\, d\mu
\stackrel{\eqref{eq:Fit1}}{\le}2\|\widehat{U}_{\alpha}\mu\|_{\infty}\form_{\alpha}(u_{|f|},u_{|f|})\le 
2K_\alpha^2\|\widehat{U}_{\alpha}\mu\|_{\infty} \form_\alpha(|f|,|f|).
\end{align*} 
On the other hand, $\form_\alpha(|f|,|f|)\leq 4K_\alpha^2 \form_\alpha(f,f)$ by \cite[Proposition~2.9]{F01}, resulting in Fitzsimmons' original inequality for general functions:  for $\alpha>\beta_0$
\begin{align*}
  \int_E f^2 \, d\mu \le 8K_\alpha^4 \|\widehat{U}_\alpha\mu\|_{\infty} \form_\alpha(f,f).
\end{align*}
However, we always have $\mathscr{E}_{\alpha}(|f|,|f|)\leq \mathscr{E}_{\alpha}(f,f)$ for any $f\in \dom$ by 
\cite[(1.1.12)]{O13}. 
One can obtain, for $\alpha>\beta_0$
\begin{align*}
  \int_E f^2 \, d\mu \le 2K_\alpha^2 \|\widehat{U}_\alpha\mu\|_{\infty} \form_\alpha(f,f).
\end{align*}
\end{rem}

\section{Closed Forms Constructed from small Perturbations}
\label{sec_smallpertub}

In this section, we consider perturbations of lower bounded closed forms and compare the class $\mathsf{S}_0$ before the perturbation with that after the perturbation.  To this end, we start with a closed form $(\form^0, D(\form^0))$ on $L^2(E)$ with a lower bound $-\beta_0\leq 0$. 
We assume $(\form^0, D(\form^0))$ admits a core $\mathscr{C} \subset D(\form^0) \cap C_0(E)$. 

Let $b$ be a bilinear form defined on ${\mathscr{C}}\times{\mathscr{C}}$. Assume that there exist {
  $0< \theta <1$} and $C\ge 0$ such that for any $u, v \in {\mathscr{C}}$,
\begin{equation} \label{perturb}
\begin{array}{rl}
  \big|b(u,v)\big| \le & {
                         \theta} \sqrt{\form^0_{\beta_0}(u,u)}\sqrt{\form^0_{\beta_0}(v,v)}  \\[8pt]
& \quad + C\Big( \sqrt{\form^0_{\beta_0}(u,u)} \|v\|_{L^2} + \|u\|_{L^2} \sqrt{\form^0_{\beta_0}(v,v)} + \|u\|_{L^2}  \|v\|_{L^2}  \Big).
\end{array}
\end{equation}
We define the perturbed bilinear form $\form$ by
$$
\form(u,v):=\form^0(u,v)+b(u,v), \quad u,v \in {\mathscr{C}}.
$$
By setting $\dom:=D(\form^0)$,  we see that the pair $(\form, \dom)$ produces a lower bounded closed form on $L^2(E)$ with a parameter
$$
\beta':=
\frac{\beta_0(1-\theta)+C(1-\theta)+C^2}{1-\theta} \  > \ \beta_0.
$$

In fact, the lower boundedness and the non-negative definiteness of $\form_{\beta'}$ can be verified as follows. 
For any $u \in \mathscr{C}$, by considering the diagonal component of \eqref{perturb}, we have the lower estimate
\begin{align*}
\form(u,u) & = \form^0(u,u) + b(u,u) \\
           &
             \ge \form^0(u,u) - \theta \form^0_{\beta_0}(u,u) - C \left( 2\sqrt{\form^0_{\beta_0}(u,u)}\|u\|_{L^2} + \|u\|_{L^2}^2 \right).
\end{align*}
By using the definition $\form^0(u,u) = \form^0_{\beta_0}(u,u) - \beta_0 \|u\|_{L^2}^2$, we can rewrite the right-hand side as:
\begin{equation} \label{eq:lower-1}
    \form(u,u) \ge (1-\theta)\form^0_{\beta_0}(u,u) - \beta_0 \|u\|_{L^2}^2 - 2C \sqrt{\form^0_{\beta_0}(u,u)}\|u\|_{L^2} - C\|u\|_{L^2}^2.
\end{equation}
To control the cross term, we apply Young's inequality $2xy \le \epsilon x^2 + \frac{1}{\epsilon}y^2$ with 
  $\epsilon = 1-\theta > 0$:
\begin{equation*}
2C \sqrt{\form^0_{\beta_0}(u,u)}\|u\|_{L^2} \le (1-\theta)\form^0_{\beta_0}(u,u) + \frac{C^2}{1-\theta}\|u\|_{L^2}^2.
\end{equation*}
Substituting this back into \eqref{eq:lower-1} directly  cancels out the primary energy term
  $(1-\theta)\form^0_{\beta_0}(u,u)$, yielding:
\begin{align*}
\form(u,u) & \ge (1-\theta)\form^0_{\beta_0}(u,u) - \beta_0 \|u\|_{L^2}^2 - \left( (1-\theta)\form^0_{\beta_0}(u,u) + \frac{C^2}{1-\theta}\|u\|_{L^2}^2 \right) - C\|u\|_{L^2}^2 \\
& = - \left( \beta_0 + C + \frac{C^2}{1-\theta} \right) \|u\|_{L^2}^2 \\
& = - \left( \frac{\beta_0(1-\theta) + C(1-\theta) + C^2}{1-\theta} \right) \|u\|_{L^2}^2
\end{align*}
which immediately implies $\form_{\beta'}(u,u) \ge 0$ for all $u \in \mathscr{C}$. By the denseness of core \(\mathscr{C}\), this non-negative definiteness extends to the whole domain $\dom = D(\form^0)$.

We assume the Markov property $(\form.4)$, that is, $u^{\#} := u \wedge a \in \dom$ and
\begin{equation}\label{eq:markov-condition}
\form(u^{\#}, u - u^{\#}) = \form^0(u^{\#}, u - u^{\#}) + b(u^{\#}, u - u^{\#}) \ge 0
\end{equation}
holds for all $u \in \dom$ and \(a\geq 0\). Under this constraint, $(\form, \dom)$ becomes a lower bounded semi-Dirichlet form on $L^2(E)$. It is crucial to note here that while the total pair $(\form, \dom)$ satisfies the Markov property, the baseline form $(\form^0, D(\form^0))$ itself is not necessarily a semi-Dirichlet form.

Nevertheless, as remarked before,  the lack of the Markov property for $(\form^0, D(\form^0))$ poses no obstruction to the potential-theoretic formulation of the smooth measures. Since both $(\form^0, D(\form^0))$ and $(\form, \dom)$ are lower bounded closed forms on $L^2(E)$, we can legitimately define the class $\mathsf{S}_0(\form)$ of measures of finite energy integrals directly with respect to the perturbed form $\form$, independently of whether the Markov property holds. Specifically, a positive Radon measure $\mu$ belongs to $\mathsf{S}_0(\form)$ if
{
for any $\beta > \beta'$, 
}
there exists a constant $C > 0$ such that
\begin{equation}\label{eq:S0-perturbed-def}
\int_E |u(x)| \, \mu(dx) \le C \sqrt{\form_{\beta}(u,u)} \quad \text{for all } u \in \mathscr{C},
\end{equation}
and the unique existence of the associated potential and co-potential in $\dom$ is fully guaranteed by the Hilbert space structure.
Then we have the following proposition.
 
\begin{prop} \label{prop:S0-coincide}
The two classes of positive Radon measures of finite energy integrals coincide. Namely, we have
\begin{equation}
\mathsf{S}_0(\form^0) = \mathsf{S}_0(\form).
\end{equation}
\end{prop}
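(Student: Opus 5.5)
The plan is to show that the diagonal energies $\form_\beta(u,u)$ and $\form^0_\gamma(u,u)$ are comparable on the core $\mathscr{C}$ for suitable $\beta>\beta'$ and $\gamma>\beta_0$. Since membership in $\mathsf{S}_0$ in \eqref{eq:S0-def} (resp.\ \eqref{eq:S0-perturbed-def}) only involves the diagonal, hence the symmetric part, both inclusions then follow at once. By the form equivalence \eqref{form-top}, applied to $\form^0$ and to $\form$ separately, the choice of $\beta$ or $\gamma$ does not matter. By density of $\mathscr{C}$ in $D(\form^0)\cap C_0(E)$, where $\form$ and $\form^0$ extend continuously, testing on $\mathscr{C}$ is the same as testing on $D(\form^0)\cap C_0(E)$. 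I would state this reduction explicitly, since the definition for $\form^0$ uses $\dom\cap C_0(E)$ and the one for $\form$ uses $\mathscr{C}$.

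For the upper bound, take $u\in\mathscr{C}$ and put $v=u$ in \eqref{perturb}. This gives
\[
|b(u,u)|\le \theta\form^0_{\beta_0}(u,u)+2C\sqrt{\form^0_{\beta_0}(u,u)}\|u\|_{L^2}+C\|u\|_{L^2}^2 .
\]
Applying Young's inequality to the cross term yields $|b(u,u)|\le(\theta+1)\form^0_{\beta_0}(u,u)+(C^2+C)\|u\|_{L^2}^2$. Hence, for $\beta>\beta'$,
\[
\form_\beta(u,u)\le (2+\theta)\form^0_{\beta_0}(u,u)+(C^2+C+\beta)\|u\|^2_{L^2}\le M\,\form^0_{\beta}(u,u),
\]
where $M$ is an explicit constant and we use $\form^0_{\beta}(u,u)=\form^0_{\beta_0}(u,u)+(\beta-\beta_0)\|u\|_{L^2}^2$ with $\beta-\beta_0>0$. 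Consequently, if $\mu\in\mathsf{S}_0(\form)$, then $\int|u|\,d\mu\le C_\mu\sqrt{M\form^0_\beta(u,u)}$, so $\mu\in\mathsf{S}_0(\form^0)$.

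For the lower bound, I would reuse the computation already carried out before the proposition, but with Young's parameter $\epsilon=(1-\theta)/2$ in place of $1-\theta$. This leaves a positive fraction of the energy:
\[
\form(u,u)\ge \tfrac{1-\theta}{2}\form^0_{\beta_0}(u,u)-\Big(\beta_0+C+\tfrac{2C^2}{1-\theta}\Big)\|u\|_{L^2}^2 .
\]
Therefore, for $\beta$ large, namely $\beta\ge \beta_0+C+2C^2/(1-\theta)+1$, we get $\form_\beta(u,u)\ge \frac{1-\theta}{2}\form^0_{\beta_0}(u,u)+\|u\|^2_{L^2}\ge m\,\form^0_{\gamma}(u,u)$ for some fixed $\gamma>\beta_0$ and $m>0$. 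Thus $\mu\in\mathsf{S}_0(\form^0)$ implies $\mu\in\mathsf{S}_0(\form)$ for this large $\beta$, and by \eqref{form-top} for $\form$ (whose lower bound parameter is $\beta'$) for every $\beta>\beta'$.

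The main obstacle is this lower bound. The computation preceding the proposition uses $\epsilon=1-\theta$, which cancels the energy term completely and so gives only nonnegativity of $\form_{\beta'}$, not coercivity. The key point is to retain a strictly positive multiple of $\form^0_{\beta_0}$ by taking a smaller $\epsilon$, and then to enlarge $\beta$ to absorb the $L^2$ terms, using the independence of $\mathsf{S}_0$ from the choice of $\beta$. Everything else is bookkeeping of constants.
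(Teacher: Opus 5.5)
Your proposal is correct and follows the paper's own route: the paper also derives a two-sided comparison $c_1\sqrt{\form^0_\beta(u,u)}\le\sqrt{\form_\beta(u,u)}\le c_2\sqrt{\form^0_\beta(u,u)}$ on $\mathscr{C}$ from \eqref{perturb} and Young's inequality, reads off the two inclusions, and closes with the same appeal to density of the core. Your choice $\epsilon=(1-\theta)/2$, instead of the $\epsilon=1-\theta$ used before the proposition (which only yields $\form_{\beta'}\ge 0$), combined with \eqref{form-top} for $\form$ with lower bound $-\beta'$, makes explicit the coercivity constant $c_1$ that the paper asserts without computation.
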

\begin{proof}
This equivalence is an immediate consequence of the norm equivalence between the baseline form and the perturbed form. Indeed, by the perturbation condition \eqref{perturb} and Young's inequality, for any $\beta > \beta'$, there exist constants $c_1, c_2 > 0$ such that
\begin{equation}\label{eq:norm-equiv}
c_1 \sqrt{\form^0_{\beta}(u,u)} \le \sqrt{\form_{\beta}(u,u)} \le c_2 \sqrt{\form^0_{\beta}(u,u)}
\end{equation}
holds for all $u \in \mathscr{C}$.

Now, let $\mu \in \mathsf{S}_0(\form^0)$. By definition \eqref{eq:S0-def}, for $\beta > \beta' (> \beta_0)$, there exists a constant $C > 0$ satisfying $\int_E |u| \, d\mu \le C \sqrt{\form^0_{\beta}(u,u)}$ for all $u \in \mathscr{C}$. Applying the left-hand inequality of \eqref{eq:norm-equiv}, we directly obtain
\begin{equation*}
\int_E |u| \, d\mu \le \frac{C}{c_1} \sqrt{\form_{\beta}(u,u)} \quad \text{for all } u \in \mathscr{C},
\end{equation*}
which implies $\mu \in \mathsf{S}_0(\form)$, and hence $\mathsf{S}_0(\form^0) \subset \mathsf{S}_0(\form)$. Conversely, if $\mu \in \mathsf{S}_0(\form)$, the right-hand inequality of \eqref{eq:norm-equiv} similarly yields $\int_E |u| \, d\mu \le C c_2 \sqrt{\form^0_{\beta}(u,u)}$, proving the reverse inclusion $\mathsf{S}_0(\form) \subset \mathsf{S}_0(\form^0)$. By the denseness of the core $\mathscr{C}$, the proof is complete.
\end{proof}

In applications, many non-symmetric processes are naturally obtained through certain transformations of symmetric ones. With this in mind, 
 we assume that $\form^0$ is symmetric, i.e., $\form^0(u,v)=\form^0(v,u)$, in the rest of this section.   Then, for each measure $\nu \in \mathsf{S}_0 := \mathsf{S}_0(\form^0) = \mathsf{S}_0(\form)$ and any parameter $\beta > \beta'$, although both the potentials $U_{\beta} \nu$ and the co-potentials $\widehat{U}_{\beta} \nu$ belong to the same domain $\dom$, the non-symmetry of the perturbation $b(u,v)$ induces a significant structural discrepancy between them.

For a measure $\nu \in \mathsf{S}_0$,  potentials $U_\beta \nu \in \dom$  and $\widehat{U}_\beta \nu \in \dom$ 
are expresses as 
$$
\form_\beta(U_\beta \nu, v) =\form_\beta^0(U_\beta \nu, v) +b(U_\beta \nu, v)=\int_E v(x) \nu(dx) \quad {\rm for \ all} \ v\in {\mathscr{C}}
$$
and 
$$
\form_\beta(v, \widehat{U}_\beta \nu) =\form_\beta^0(v, \widehat{U}_\beta \nu) +b(v, \widehat{U}_\beta \nu)=\int_E v(x) \nu(dx) 
\quad {\rm for \ all} \ v\in {\mathscr{C}}.
$$
Then, subtracting the both sides and putting $w:=U_\beta \nu -\widehat{U}_\beta \nu$, we see that 
$$
\form_\beta^0(w,v) =  b(v, \widehat{U}_\beta \nu)-b(U_\beta \nu, v)  \quad {\rm for \ all} \ v\in {\mathscr{C}}.
$$
By the denseness of the core \(\mathscr{C}\), we can substitute $v = w=U_\beta \nu -\widehat{U}_\beta \nu$ into the above equality,  we can conclude the following  energy formula holds.

\begin{prop} \rm Assume that $\form^0$ is symmetric. For $\nu \in \mathsf{S}_0$, the following holds for \(\beta > \beta'\).
\begin{equation} \label{enq:energy-formula}
\form_\beta(U_\beta \nu -\widehat{U}_\beta \nu, U_\beta \nu -\widehat{U}_\beta \nu)
= b(U_\beta \nu, \widehat{U}_\beta \nu) - b(\widehat{U}_\beta \nu, U_\beta \nu).
\end{equation}
\end{prop}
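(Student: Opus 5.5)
The plan is to reduce the identity to the energy coincidence \eqref{eq:macro-energy-coincidence} of Proposition~\ref{prop:S_0-properties}, applied to the perturbed form $(\form,\dom)$ with lower bound $-\beta'$, and to the symmetry of $\form^0$. Write $U:=U_\beta\nu$ and $\widehat U:=\widehat U_\beta\nu$, so that $w=U-\widehat U$.

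First I check that all quantities make sense on $\dom$, not only on $\mathscr{C}$. The bound \eqref{perturb} shows that $b$ is jointly continuous on $\mathscr{C}\times\mathscr{C}$ for the $\form^0_{\beta}$-norm, since the $L^2$-norm is dominated by that norm for $\beta>\beta_0$. Hence $b$ extends uniquely to a bounded bilinear form on $\dom\times\dom$. It follows that $\form=\form^0+b$ holds on all of $\dom\times\dom$, and the norm equivalence \eqref{eq:norm-equiv} stays available. Since $(\form,\dom)$ is a closed form with a core, Proposition~\ref{prop:S_0-properties} applies to it for $\beta>\beta'$. It gives
\[
\form_\beta(U,U)=\form_\beta(\widehat U,\widehat U)=\form_\beta(U,\widehat U).
\]

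Next I expand the left-hand side of \eqref{enq:energy-formula} bilinearly:
\[
\form_\beta(w,w)=\form_\beta(U,U)-\form_\beta(U,\widehat U)-\form_\beta(\widehat U,U)+\form_\beta(\widehat U,\widehat U).
\]
By the three-fold equality above, the first two terms cancel, and the last term can be replaced by $\form_\beta(U,\widehat U)$. This leaves
\[
\form_\beta(w,w)=\form_\beta(U,\widehat U)-\form_\beta(\widehat U,U).
\]
Finally I split $\form_\beta=\form^0_\beta+b$. Because $\form^0$ is symmetric, and the $L^2$ inner product is symmetric too, the terms $\form^0_\beta(U,\widehat U)$ and $\form^0_\beta(\widehat U,U)$ cancel. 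Only $b(U,\widehat U)-b(\widehat U,U)$ survives, which is exactly \eqref{enq:energy-formula}.

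As a cross-check, the route sketched before the statement gives the same result. One extends $\form^0_\beta(w,v)=b(v,\widehat U)-b(U,v)$ from $v\in\mathscr{C}$ to $v=w$ by density and continuity. Adding $b(w,w)$ and expanding, the terms $b(U,U)$ and $b(\widehat U,\widehat U)$ cancel, leaving the same two cross terms.

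The only step needing care is the justification of the density and extension arguments, namely that $b$ and $\form$ extend continuously to $\dom$ and that the relation \eqref{S_0-0} holds for all test functions in $\dom$. This is where \eqref{perturb} and the core property are used. The rest is pure algebra.
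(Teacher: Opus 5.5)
Your proof is correct and is essentially the paper's argument. Both test the extended relation $\form_\beta(U_\beta\nu,v)=\form_\beta(v,\widehat U_\beta\nu)$, $v\in\dom$, at combinations of $U_\beta\nu$ and $\widehat U_\beta\nu$, then use the symmetry of $\form^0$. The paper rewrites the relation as $\form^0_\beta(w,v)=b(v,\widehat U_\beta\nu)-b(U_\beta\nu,v)$ and tests only at $v=w$, which is exactly your cross-check; your ordering also shows that $\form_\beta(w,w)=\form_\beta(U_\beta\nu,\widehat U_\beta\nu)-\form_\beta(\widehat U_\beta\nu,U_\beta\nu)$ holds for any closed form, with the symmetry of $\form^0$ needed only to identify this antisymmetric part with that of $b$.
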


\begin{rem} \rm 
The identity \eqref{enq:energy-formula} provides a clear geometric insight into how the asymmetry of the system distorts the potential structure.
If the perturbation $b$ is also symmetric, i.e., $b(u,v) = b(v,u)$, the right-hand side of \eqref{enq:energy-formula} vanishes identically. Then, by the coercivity (or the sector condition) of the lower bounded closed form $\form_\beta$, it immediately follows that 
$U_\beta \nu =\widehat{U}_\beta \nu$.

Conversely, if the perturbation $b$ is non-symmetric, the antisymmetric part of $b$, resembling a Lie bracket, directly emerges on the right-hand side. This antisymmetric component acts as the exact quantitative source of energy that breaks the symmetry, forcing a profound structural discrepancy between the potentials $U_\beta \nu$ and $\widehat{U}_\beta \nu$.
\end{rem}

\begin{rem} \rm 

Regarding the construction of lower bounded semi-Dirichlet forms through non-symmetric perturbations satisfying the condition \eqref{perturb}, we refer the reader to \cite{U26} for various other fundamental examples. In particular, \cite{U26} provides concrete constructions where a symmetric diffusion process is perturbed by a non-symmetric non-local form, as well as examples where a symmetric non-local form is perturbed by another non-symmetric non-local form. 
\end{rem}

For any \(\mu \in \mathsf{S}_0(\form^0) = \mathsf{S}_0(\form) \) and \(\beta >\beta_0\), there exists \(U_\beta^0 \mu \in D(\form^0)=\dom\) such that
  \[\form_\beta^0(U_\beta^0 \mu, v) =\form_\beta^0(v,U_\beta^0 \mu) = \int_E v(x) \mu(dx) \quad {\rm for \ all} \ v\in {\mathscr{C}}.\]
As the same way as (\ref{eq:metric-def}), we define \(\rho_\beta^0\) by
\[\rho_\beta^0(\mu, \nu) := \sqrt{\form_\beta^0 (U_\beta^0 \mu - U_\beta^0 \nu, U_\beta^0 \mu - U_\beta^0 \nu)}.\]
Then we have the following proposition.
\begin{prop}\label{prop:homeo}
Assume that $\form^0$ is symmetric. Then, for \(\beta >\beta'\), there exists a constant \(C>0\) such that, for \(\mu, \nu \in \mathsf{S}_0(\form^0) = \mathsf{S}_0(\form)\), it holds that
\[|\rho_\beta^0(\mu, \nu) - \rho_\beta (\mu, \nu)| \leq C \rho_\beta^0(\mu, \nu) \wedge \rho_\beta(\mu, \nu).\]
In particular, \(( \mathsf{S}_0(\form), \rho_\beta)\) is homeomorphic to \((\mathsf{S}_0(\form^0), \rho_\beta^0)\).
\end{prop}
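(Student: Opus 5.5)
The plan is to show that the identity map on $\mathsf{S}_0$ is bi-Lipschitz between $\rho^0_\beta$ and $\rho_\beta$. Concretely, I will find constants $a,b>0$ with $\rho_\beta\le a\,\rho^0_\beta$ and $\rho^0_\beta\le b\,\rho_\beta$. The stated estimate then follows by a case distinction, and the homeomorphism claim is immediate. The key observation is that both potentials represent the same linear functional $v\mapsto\int_E v\,d\mu-\int_E v\,d\nu$. The argument only needs \eqref{eq:norm-equiv} from the proof of Proposition \ref{prop:S0-coincide} and the sector condition for $\form_\beta$.

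Fix $\beta>\beta'$ and $\mu,\nu\in\mathsf{S}_0$. Set $u:=U_\beta\mu-U_\beta\nu$ and $u_0:=U^0_\beta\mu-U^0_\beta\nu$, both in $\dom=D(\form^0)$. By the defining relations, for all $v\in\mathscr{C}$
\[
\form_\beta(u,v)=\int_E v\,d\mu-\int_E v\,d\nu=\form^0_\beta(u_0,v)=\form^0_\beta(v,u_0),
\]
where the last equality uses the symmetry of $\form^0$. By density of $\mathscr{C}$ in $\dom$ and continuity of both forms (which follows from \eqref{eq:norm-equiv} and the sector conditions), this identity extends to all $v\in\dom$.

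\emph{Step 1.} Take $v=u$. Since $\form^0_\beta$ is a symmetric inner product, Cauchy--Schwarz and the left inequality of \eqref{eq:norm-equiv} (extended to $\dom$) give
\[
\rho_\beta^2=\form_\beta(u,u)=\form^0_\beta(u_0,u)\le\rho^0_\beta\sqrt{\form^0_\beta(u,u)}\le c_1^{-1}\rho^0_\beta\,\rho_\beta .
\]
Hence $\rho_\beta\le c_1^{-1}\rho^0_\beta$.

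\emph{Step 2.} Take $v=u_0$. The weak sector condition $(\form.2)'$ for $\form_\beta$, with constant $K'_\beta$ (valid since $\form$ is a closed form with parameter $\beta'$ and $\beta>\beta'$), together with the right inequality of \eqref{eq:norm-equiv}, gives
\[
(\rho^0_\beta)^2=\form^0_\beta(u_0,u_0)=\form_\beta(u,u_0)\le K'_\beta\,\rho_\beta\sqrt{\form_\beta(u_0,u_0)}\le K'_\beta c_2\,\rho_\beta\,\rho^0_\beta .
\]
Hence $\rho^0_\beta\le K'_\beta c_2\,\rho_\beta$. The degenerate cases where $\rho_\beta=0$ or $\rho^0_\beta=0$ are trivial, because the displayed inequalities then force the other quantity to vanish as well.

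\emph{Conclusion.} Suppose $\rho_\beta\ge\rho^0_\beta$, so that $\rho^0_\beta$ is the minimum. Then $|\rho_\beta-\rho^0_\beta|\le(c_1^{-1}-1)^+\rho^0_\beta$. Otherwise $\rho_\beta$ is the minimum, and $|\rho^0_\beta-\rho_\beta|\le(K'_\beta c_2-1)^+\rho_\beta$. Taking
\[
C:=\max\{(c_1^{-1}-1)^+,\,(K'_\beta c_2-1)^+\}
\]
yields the stated inequality. Since the two metrics are mutually bounded by constant multiples, the identity map is a homeomorphism.

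The only delicate point is justifying the extension of the identity from $\mathscr{C}$ to test functions $v=u,u_0\in\dom$. This relies on \eqref{eq:norm-equiv} holding on all of $\dom$, obtained by density, and on the continuity of $b$ that comes from \eqref{perturb}. Once that is in place, the rest is routine.
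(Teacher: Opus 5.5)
Your proof is correct, and it takes a somewhat different route from the paper's.

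\textbf{What is shared.} Both arguments start from the same observation. With $\xi=\mu-\nu$, the elements $u=U_\beta\xi$ and $u_0=U^0_\beta\xi$ represent the same functional on $\mathscr{C}$. Both then test this identity with $v=u$ and with $v=u_0$.

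\textbf{The paper's route.} The paper chains the two resulting identities into the exact formula $\rho_\beta(\mu,\nu)^2=\rho^0_\beta(\mu,\nu)^2-b(U_\beta\xi,U^0_\beta\xi)$. It then bounds the single term $|b(U_\beta\xi,U^0_\beta\xi)|\le C\rho_\beta\rho^0_\beta$ directly from \eqref{perturb}. This uses \eqref{eq:norm-equiv} and control of the $L^2$-norms of the potentials by their energies. The result is $|\rho^0_\beta-\rho_\beta|\le C\rho_\beta\rho^0_\beta/(\rho_\beta+\rho^0_\beta)\le C\,\rho^0_\beta\wedge\rho_\beta$ in one step. The harmonic-mean form is slightly sharper, though equivalent to yours up to a factor $2$.

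\textbf{Your route.} You use the two identities separately. Cauchy--Schwarz for the symmetric $\form^0_\beta$ handles one, and the weak sector condition for $\form_\beta$ handles the other. This gives two-sided Lipschitz bounds, and you then recover the min-estimate by a case split.

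\textbf{What each approach buys.}
\begin{itemize}
\item Your argument uses \eqref{perturb} only through \eqref{eq:norm-equiv} and the sector condition. It is really the general fact that equivalent energy norms on a common domain give equivalent dual norms. It also avoids the $L^2$ bookkeeping at the end of the paper's chain.
\item The paper's argument keeps more information. It identifies the discrepancy of the squared distances exactly as the perturbation term $b(U_\beta\xi,U^0_\beta\xi)$. Hence its constant scales with the constants $\theta$ and $C$ in \eqref{perturb}, and it vanishes as $b\to0$.
\item Your constant contains $K'_\beta c_2-1$. It shares that vanishing property only if you use the optimal sector constant of $\form_\beta$ rather than a generic one.
\end{itemize}
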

\begin{proof}
We take \(\beta > \beta'\) and \(\mu, \nu \in \mathsf{S}_0(\form^0) = \mathsf{S}_0(\form)\). We set \(\xi := \mu-\nu\) and we may assume that 
\(\xi \not = 0\) without loss of generality.  {
  By taking approximating sequences $\{v_n\}, \{w_n\} \subset {\mathscr C}$ such that 
$v_n \to U_\beta \xi$ both in $\form_\beta$ and in $\form^0_\beta$, and $w_n \to U^0_\beta \xi$ both in $\form_\beta$ and in $\form^0_\beta$
we evaluate the distance.  Using the continuity of the forms, the definition of the potentials, and the symmetry of $\form^0$}, we have
\begin{align*}
 \rho_\beta(\mu, \nu)^2 &= \form_\beta(U_\beta\xi, U_\beta\xi) \  =\ 
                          \lim_{n\to \infty} \form_\beta(U_\beta\xi,v_n) = \lim_{n\to \infty}   \int_E v_n \,d\xi  \\
                        & =  
                          \lim_{n\to \infty} \form_\beta^0(U_\beta^0 \xi,v_n) \ = \ \lim_{n\to \infty}  \form_\beta^0(v_n, U_\beta^0 \xi)  = \form_\beta^0(U_\beta \xi,  U_\beta^0 \xi)   \\
                        &  = \form_\beta(U_\beta\xi, U_\beta^0\xi) - b(U_\beta\xi, U_\beta^0\xi)  \  =  \ 
  \lim_{n\to \infty}  \form_\beta(U_\beta\xi, w_n) - b(U_\beta\xi, U_\beta^0\xi) \\
                        & = 
                          \lim_{n\to \infty} \int_E w_n \, d\xi - b(U_\beta\xi, U_\beta^0\xi)  =
\lim_{n\to \infty}  \form_\beta^0 (U_\beta^0 \xi, w_n)
- b(U_\beta\xi, U_\beta^0\xi)    \\
                        & =
                          \form_\beta^0 (U_\beta^0 \xi, U_\beta^0 \xi) - b(U_\beta\xi, U_\beta^0\xi)    \ = \  \rho_\beta^0(\mu, \nu)^2 -b(U_\beta\xi, U_\beta^0\xi).
\end{align*}
Combining this with (\ref{perturb}), (\ref{eq:norm-equiv}) and \(\beta > \beta' > \beta_0\), we have
\begin{align*}
& \!\!\! |\rho_\beta^0(\mu, \nu)- \rho_\beta(\mu, \nu)| = \frac{1}{\rho_\beta^0(\mu, \nu)+ \rho_\beta(\mu, \nu)}|b(U_\beta\xi, U_\beta^0\xi)|\\
&\leq  \frac{C}{\rho_\beta^0(\mu, \nu)+ \rho_\beta(\mu, \nu)} \left( \sqrt{\form^0_{\beta_0}(U_\beta\xi,U_\beta\xi)} + \|U_\beta\xi\|_{L^2}\right) \left( \sqrt{\form^0_{\beta_0}(U_\beta^0\xi,U_\beta^0\xi)} + \|U_\beta^0\xi\|_{L^2}\right)\\
&\leq  \frac{C}{\rho_\beta^0(\mu, \nu)+ \rho_\beta(\mu, \nu)} \left(\frac{1}{c_1} \sqrt{\form_{\beta'}(U_\beta\xi,U_\beta\xi)} + \|U_\beta\xi\|_{L^2}\right) \left( \sqrt{\form^0_{\beta_0}(U_\beta^0\xi,U_\beta^0\xi)} + \|U_\beta^0\xi\|_{L^2}\right)\\
&\leq  \frac{C}{\rho_\beta^0(\mu, \nu)+ \rho_\beta(\mu, \nu)}\rho_\beta(\mu, \nu) \rho_\beta^0(\mu, \nu)  
\ \leq  \ C \rho_\beta^0(\mu, \nu) \wedge \rho_\beta(\mu, \nu).
\end{align*}
Here, a constant \(C\) may change from line to line.
\end{proof}

\subsection{Examples: Perturbations of Diffusion Processes on open sets}
 In this subsection, we present three examples and show that, when the perturbation is sufficiently small, the class of measures of finite energy integrals together with its topology coincides with that of the unperturbed symmetric form. Throughout all three examples, the underlying unperturbed symmetric form \((\form^0, D(\form^0))\) is taken to be the same and is given by the following form. Let $D\subset \real^d$ be an open set \ $(d\ge 2)$.  Assume that $A(x)=(a_{ij}(x))$ is a $d\times d$ symmetric matrix valued measurable function on $D$ satisfying that there exist $0<\lambda \le  \Lambda$ such that 
$$
\lambda |\xi|^2 \le A(x) \xi \cdot \xi \le \Lambda |\xi|^2, \quad  x\in D, \ \xi \in \real^d.
$$
Define a symmetric form $\form^0$ on $C_0^\infty(D) \times C_0^\infty(D)$ as follows:
$$
\form^0(u,v):= \int_D A(x) \nabla u(x) \cdot \nabla v(x)dx.
$$
Note that the form $\form^0$ satisfies the inequality: 
$$ 
\lambda {\mathbb D}(u,u) \le \form^0(u,u) \le \Lambda {\mathbb D}(u,u)
$$
for  $u \in C_0^\infty(D)$, where ${\mathbb D}(u,v) = \int_D \nabla u(x) \cdot \nabla v(x) dx 
$ is the Dirichlet integral on $D$.  Then this implies that $(\form^0, H^1_0(D))$ is a regular symmetric strongly local Dirichlet form on $L^2(D):=L^2(D;dx)$.

We construct several perturbed semi-Dirichlet forms of the type $\form = \form^0 + b$, where the non-symmetric perturbation $b$ is formally given by the following bilinear form:
\begin{equation}\label{eq:b-form-def}
b(u,v) := \int_D B(x) \cdot \nabla u(x) \, v(x) \, \mu(dx).
\end{equation}
Here, $B(x) = (B_i(x))$ is a drift vector field  and $\mu$ is a Borel measure that may be mutually singular to the Lebesgue measure $dx$ on $D$, but satisfies our key inequality \eqref{perturb}.

\begin{exam}[Singular Local Drifts on the Lebesgue Measure] \label{ex:coincides_1}
\rm 
Let $D \subset \mathbb{R}^d$ ($d \ge 3$) be an open set. We consider a non-symmetric perturbation given by a singular vector field $B(x)$ with respect to the Lebesgue measure $dx$:
\begin{align*}
b(u, v) := \int_D B(x) \cdot \nabla u(x) \,  v(x) dx.
\end{align*}
We assume that the vector field satisfies either $|B|^2 \in \mathscr{K}_d(D)$ or $B \in (L^q(D))^d$ for $d \le q \le\infty.$    Here \(\mathscr{K}_d(D):=\{f\in L^0(D) \mid \lim_{r\searrow 0}\sup_{x\in D} \int_{|x-y|\leq r}\frac{|f(y)|}{|x-y|^{d-2}}\,dy =0 \}\) is the space of Kato class functions and $L^0(D)$ denotes the space of measurable functions on $D$. We prove that the key inequality \eqref{perturb} holds for both cases.

\smallskip
\noindent
\underline{Case 1: $|B|^2 \in \mathscr{K}_d(D)$:}

\smallskip
By the analytic characterization of the Kato class, the measure $\mu(dx) := |B(x)|^2 dx$ satisfies the condition that for any $\delta > 0$, there exists a constant $C_\delta > 0$ such that
\begin{align*}
\int_D v^2 |B(x)|^2 dx \le \delta \int_D |\nabla v|^2 dx + C_\delta \|v\|_{L^2(D)}^2, \quad \text{for all } v \in C_0^\infty(D).
\end{align*}
Applying the Cauchy--Schwarz inequality to the perturbation $b(u, v)$, we have
\begin{align*}
|b(u, v)| \le \left( \int_D |\nabla u|^2 dx \right)^{1/2} \left( \int_D v^2 |B|^2 dx \right)^{1/2}.
\end{align*}
Combined with the Kato class inequality above and the uniform ellipticity of the unperturbed symmetric form $\form^A$, we can bound the perturbation by an arbitrarily small constant $a \propto \sqrt{\delta}$ by choosing $\delta > 0$ small enough, and \eqref{perturb} holds.
\smallskip

\noindent
\underline{Case 2: $B=(B_i)_{1\leq i \leq d}  \in (L^q(D))^d$ for $d \le q \le \infty$:} 

\smallskip
Suppose that $B=(B_i)_{1\leq i \leq d}  \in (L^q(D))^d$ for $d < q \le \infty$. We take \(p\) satisfying \(p^{-1}+2q^{-1}=1\), then \(p<d/(d-2)\) if and only if $d< q \le \infty$. By the Cauchy--Schwarz inequality, we have
\begin{align*}
\varlimsup_{r\searrow 0} \sup_{x\in D} \int_{|x-y|\leq r} \frac{|B(y)|^2}{|x-y|^{d-2}}\,dy &\leq \varlimsup_{r\searrow 0} \sup_{x\in D} \|B\|_{L^q}^2 \left( \int_{|x-y|\leq r} \frac{1}{|x-y|^{p(d-2)}}\,dy \right)^{1/p}\\
& \leq  \|B\|_{L^q}^2 \varlimsup_{r\searrow 0} \left( \int_0^r \frac{\theta^{d-1}}{\theta^{p(d-2)}}\,d\theta \right)^{1/p}\\
& = 0.
\end{align*}
Hence $|B|^2 \in \mathscr{K}_d(D)$, and \eqref{perturb} holds for $d < q \le \infty$ by the Case 1.

Next we assume $B=(B_i)_{1\leq i \leq d} \in (L^d(D))^d$. By the Gagliardo--Nirenberg-Sobolev inequality, there exists \(C_G>0\) such that\(\|v\|_{L^{\frac{2d}{d-2}}(D)} \leq C_G \|\nabla v\|_{L^2(D)} \) holds for any \(v \in C_0^\infty\). We take \(M>0\) satisfying \(C_G \|B\mathds{1}_{\{|B|>M\}}\|_{L^d(D)} < 2^{-1} \lambda \). Then, by the Cauchy--Schwarz inequality, we have
\begin{align}
\nonumber \left| \int_D B(x) \mathds{1}_{\{|B|>M\}}\nabla u(x) v(x)\,dx \right| &\leq \|B\mathds{1}_{\{|B|>M\}}\|_{L^d(D)}\|\nabla u\|_{L^2(D)} \|v\|_{L^{\frac{2d}{d-2}}(D)}\\
& \leq 2^{-1}\lambda  \|\nabla u\|_{L^2(D)}\|\nabla v\|_{L^2(D)} \label{eq:q=d_1}
\end{align}
for \(u,v \in C_0^\infty\).
By the Cauchy--Schwarz inequality, we have
\begin{align}
\left| \int_D B(x) \mathds{1}_{\{|B|\leq M\}}\nabla u(x) v(x)\,dx \right| &\leq M \|\nabla u\|_{L^2(D)} \|v\|_{L^2(D)} \label{eq:q=d_2}
\end{align}
for \(u,v \in C_0^\infty\). By (\ref{eq:q=d_1}) and (\ref{eq:q=d_2}), we have 
\begin{align*}
|b(u,v)| &\leq 2^{-1}\lambda  \sqrt{{\mathbb D}(u,u)}\sqrt{{\mathbb D}(v,v)} + M\sqrt{{\mathbb D}(u,u)} \|v\|_{L^2(D)}\\
& \leq 2^{-1} \sqrt{\mathscr{E}^0(u,u)}\sqrt{\mathscr{E}^0(v,v)}+ M \lambda^{-1/2}\sqrt{\mathscr{E}^0(u,u)} \|v\|_{L^2(D)}.
\end{align*}

In both cases, the perturbed form $\form(u, v) = \form^A(u, v) + b(u, v)$ is well-defined and satisfies the lower boundedness. The framework of Case 2 further extends Case 1 by allowing the drift $B(x)$ to possess severe local singularities that do not necessarily belong to the standard $L^d(D)$ space, while precisely preserving the semi-Dirichlet properties of the diffusion process.
\end{exam}

\begin{exam}\label{ex:coincides_2} \rm 
 Let $S \subset D$ be a sphere compactly contained in $D$. Let $\sigma_S$ be the uniform surface measure concentrated on $S$. On this sphere, we activate a singular tangential flow given by the following bilinear form:
\begin{align*}
b(u, v) := \gamma\int_{S}  B(x) \cdot \nabla_S u(x) \,  v(x) \sigma_S(dx),
\end{align*}
where \(\gamma\) is a positive constant and $B(x)$ is a smooth tangential vector field on $S$ (i.e., $B(x)$ is orthogonal to the normal vector $n(x)$ at every $x \in S$). 

Since $B$ is tangential to the manifold $S$, the inner product $B \cdot \nabla u$ only involves the tangential derivative $\nabla_S u$. By applying the integration by parts on the sphere $S$, the diagonal component of the perturbation evaluates to:
\begin{align*}
b(u, u) &= \int_{S} (B \cdot \nabla_S u) u d\sigma_S = \frac{1}{2} \int_{S} B \cdot \nabla_S (u^2) d\sigma_S \\
&= -\frac{1}{2} \int_{S} (\text{div}_S B) u^2 d\sigma_S,
\end{align*}
where $\text{div}_S B$ is the surface divergence of $B$ on $S$.
If we choose $B(x)$ to be a divergence-free vector field on $S$ (e.g., a rigid rotational vector field along $S$), we have $\text{div}_S B = 0$, which implies $b(u, u) = 0$ for all $u \in C_0^\infty(D)$. Consequently, the perturbed form $\form(u, u) = \form^A(u, u) + b(u, u) = \form^A(u, u)$ automatically preserves the lower boundedness without any restriction on the magnitude of the vector field $B$.

Furthermore, by the fractional trace embedding theorem, the trace of the function $u \in H^1_0(D)$ itself belongs to $H^{1/2}(S)$, and the tangential derivative $\nabla_S u$ belongs to the dual space $H^{-1/2}(S)$. The duality pairing ensures that $|b(u, v)| \le C  \gamma \|u\|_{H^1_0(D)} \|v\|_{H^1_0(D)}$, thus satisfying the weak sector condition $(\mathscr{E},2)'$. Therefore, the pair $(\form, H^1_0(D))$ yields a legitimate non-symmetric lower bounded semi-Dirichlet form, representing a diffusion process on $D$ that experiences a singular tangential swirl exactly when it hits the spherical surface $S$.  Moreover, for small \(\gamma >0\), the inequality \eqref{perturb} holds and hence, by Proposition \ref{prop:S0-coincide} and \ref{prop:homeo}, the set of  measures of finite energy integrals are coincide \(\mathsf{S}_0(\form) = \mathsf{S}_0(\form^0)\) and \(( \mathsf{S}_0(\form), \rho_\beta)\) is homeomorphic to \((\mathsf{S}_0(\form^0), \rho_\beta^0)\).
\end{exam}

\begin{exam}[Non-local Asymmetric Perturbation on a Fractal Set]\label{ex:coincides_3} \rm 

Let $F \subset D$ be a closed subset which is an $s$-set with $d-2 < s < d$, and let $\mu = \mathscr{H}^s|_F$ be the $s$-dimensional Hausdorff measure restricted to $F$. Since the fractal set $F$ lacks a differentiable structure, the local drift $\nabla u$ cannot be traced onto $F$. Instead, we introduce a highly singular non-local asymmetric perturbation supported purely on the fractal $F$.

Let $k: F \times F \to \mathbb{R}$ be a measurable anti-symmetric kernel, i.e., $k(x, y) = -k(y, x)$. Instead of assuming $k$ to be bounded, we allow $k(x, y)$ to possess a strong singularity near the diagonal $x=y$. Let $\beta = 1 - \frac{d-s}{2} \in (0, 1)$ be the fractional smoothness index. We assume that $k$ satisfies the following weighted $L^2$-integrability condition:
\begin{align*}
\sup_{x \in F} \int_F |k(x, y)|^2 |x-y|^{s+2\beta} \mu(dy) =: M^2 < \infty.
\end{align*}
We define the non-symmetric bilinear form $b(u, v)$ by
\begin{align*}
b(u, v) := \gamma \int_F \int_F (u(x)-u(y))v(x) k(x, y) \mu(dx)\mu(dy)
\end{align*}
 for a positive constant \(\gamma>0\). Due to the anti-symmetry of the kernel $k(x, y)$, the diagonal component evaluates to:
\begin{align*}
b(u, u) &= \frac{\gamma}{2} \int_F \int_F (u(x)-u(y))(u(x)+u(y)) k(x, y) \mu(dx)\mu(dy) \\
&= \frac{\gamma}{2} \int_F \int_F (u(x)^2 - u(y)^2) k(x, y) \mu(dx)\mu(dy) = 0.
\end{align*}
Thus, on the diagonal, the identity $\form(u, u) = \form^0(u, u)$ holds, which preserves the lower boundedness without any further constraints.

Furthermore, by symmetrizing the integrand as 
$$
b(u, v) = \frac{\gamma}{2} \iint (u(x)-u(y))(v(x)+v(y)) k(x, y) \mu(dx)\mu(dy),
$$
multiplying and dividing by $|x-y|^{(s+2\beta)/2}$, and applying the Cauchy--Schwarz inequality to the double integral, we obtain
\begin{align*}
|b(u, v)| &\le \frac{\gamma}{2} \left( \int_F \int_F \frac{|u(x)-u(y)|^2}{|x-y|^{s+2\beta}} \mu(dx)\mu(dy) \right)^{1/2} \\
&\quad \times \left( \int_F \int_F (v(x)+v(y))^2 |k(x, y)|^2 |x-y|^{s+2\beta} \mu(dx)\mu(dy) \right)^{1/2} \\
&\le \gamma [u]_{B^\beta_{2,2}(F)} \left( \int_F v(x)^2 \left( \int_F |k(x, y)|^2 |x-y|^{s+2\beta} \mu(dy) \right) \mu(dx) \right)^{1/2} \\
&\le \gamma M [u]_{B^\beta_{2,2}(F)} \|v\|_{L^2(F; \mu)},
\end{align*}
where $[u]_{B^\beta_{2,2}(F)}$ is the Gagliardo semi-norm of the Besov space. 
According to the fractional trace embedding theorem by Jonsson--Wallin \cite{JW84}, $H^1_0(D)$ 
is continuously embedded into the Besov space $B^\beta_{2,2}(F)$ as well as $L^2(F; \mu)$. 
Thus, there exists a constant $C_T > 0$ such that $[u]_{B^\beta_{2,2}(F)} \le C_T \|u\|_{H^1_0(D)}$ and 
$\|v\|_{L^2(F; \mu)} \le C_T \|v\|_{H^1_0(D)}$. Consequently, we obtain $|b(u, v)| \le \gamma M C_T^2 \|u\|_{H^1_0(D)} \|v\|_{H^1_0(D)}$. 

Therefore, this singular non-local perturbation strictly satisfies the weak sector condition without requiring $\int |k|d\mu < \infty$,  yielding a lower bounded semi-Dirichlet form. Probabilistically, this corresponds to a diffusion process that experiences a highly singular asymmetric jump (such as a fractional Cauchy-type jump) restricted exclusively to the fractal set $F$.

Moreover, for small \(\gamma >0\), the inequality \eqref{perturb} holds and hence, by Proposition \ref{prop:S0-coincide} and \ref{prop:homeo}, 
  the classes of measures of finite energy integrals coincide
  (i.e., \(\mathsf{S}_0(\form) = \mathsf{S}_0(\form^0)\)) and \(( \mathsf{S}_0(\form), \rho_\beta)\) is homeomorphic to \((\mathsf{S}_0(\form^0), \rho_\beta^0)\).
\end{exam}

\begin{rem}  \label{rem4} \rm
We emphasize that for all the highly singular non-symmetric perturbations constructed in Examples \ref{ex:coincides_1} and \ref{ex:coincides_2}, \ref{ex:coincides_3} with small \(\gamma\) (as well as the Hardy-class drift investigated in Section 2),  the class of measures of finite energy integrals, together with its topology, coincides exactly with that of the unperturbed symmetric form.

This analysis illustrates a clear contrast between the energy framework and the behavior of individual potentials. Macroscopically, the singular perturbations considered here do not change the admissible class $\mathsf{S}_0$ of measures of finite energy integrals. Microscopically, however, the non-symmetry splits the dual notions of potentials; even if a measure belongs to $\mathsf{S}_0$, 
its potential can remain uniformly bounded while the co-potential diverges. 

Therefore, these examples clarify the exact geometric and analytic conditions under which this potential-theoretic asymmetry occurs within the stable framework of semi-Dirichlet forms.

\end{rem}

\section{Criterion for $\mathsf{S}_0(\form^0) \neq \mathsf{S}_0(\form)$}\label{sec_largepurturb}
As emphasized in the preceding discussions (see Remark \ref{rem4}), the highly singular perturbations considered so far firmly preserve the macroscopic energy framework, strictly maintaining that the classes coincide (i.e., $\mathsf{S}_0(\form) = \mathsf{S}_0(\form^0)$)  and \(( \mathsf{S}_0(\form), \rho_\beta)\) is homeomorphic to \((\mathsf{S}_0(\form^0), \rho_\beta^0)\) as guaranteed by Propositions \ref{prop:S0-coincide} and \ref{prop:homeo}.

In this section, we show that this robust structural invariance can be broken. We consider a lower bounded semi-Dirichlet form obtained by a non-symmetric perturbation of a base symmetric form $\form^0$. Let $(\form^0, D(\form^0))$ be a regular symmetric Dirichlet form on $L^2(E)$ and let $\mathscr{C}\subset D(\form^0)\cap C_0(E)$ 
be a core of $D(\form^0)$. Consider a non-symmetric bilinear form $b$ defined on $\mathscr{C} \times \mathscr{C}$. 
We assume that there exist  a constant $\beta_0>0$ and a positive Radon measure $\mu$ on $E$ satisfying
\begin{equation}\label{eq:break_1}
\sup_{u \in \mathscr{C}} \frac{\|u\|_{L^2(E;\mu)}^2}{\form_{\beta_0}^0(u,u)} =\infty,
\end{equation}
and there exists $c>0$ such that, for any $u\in \mathscr{C}$,
\begin{equation}\label{eq:break_2}
c \|u\|_{L^2(E;\mu)}^2 \leq b(u,u)+\beta_0 \|u\|_{L^2(E;\mathfrak{m})}^2.
\end{equation}
Assume also that the perturbed form  $\form:=\form^0+b$ satisfies the strong sector condition $(\form.2)$. Then, the condition \eqref{eq:break_2} ensures its lower boundedness $(\form.1)$, and consequently, $\form$ determines a well-defined lower bounded closed form on $L^2(E;\mathfrak{m})$ with domain $D(\form) = D(\form^0)$.

\begin{prop}\label{prop:break}
It holds that $\mathsf{S}_0(\form^0) \subsetneq \mathsf{S}_0(\form)$.
\end{prop}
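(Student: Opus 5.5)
We will prove the proposition in two parts: the inclusion $\mathsf{S}_0(\form^0)\subset\mathsf{S}_0(\form)$, and then a measure lying in $\mathsf{S}_0(\form)$ but not in $\mathsf{S}_0(\form^0)$. The witness will be the measure $\mu$ from \eqref{eq:break_1}--\eqref{eq:break_2}, possibly localized to a compact set.

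\textbf{Inclusion.} For $u\in\mathscr{C}$, the assumption \eqref{eq:break_2} gives
\[
\form_{2\beta_0}(u,u)=\form^0(u,u)+b(u,u)+2\beta_0\|u\|_{L^2}^2\ \ge\ \form^0_{\beta_0}(u,u)+c\,\|u\|_{L^2(E;\mu)}^2 .
\]
Hence $\form^0_{\beta_0}(u,u)\le\form_{\beta}(u,u)$ for every $\beta\ge2\beta_0$. By \eqref{form-top}, applied to both forms, this extends to all admissible $\beta$ with a constant. Now take $\nu\in\mathsf{S}_0(\form^0)$. Then
\[
\int|u|\,d\nu\le C\sqrt{\form^0_{\beta}(u,u)}\le C'\sqrt{\form_\beta(u,u)}
\]
for $u\in\mathscr{C}$, and density of the core gives $\nu\in\mathsf{S}_0(\form)$. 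The bound $\form^0_\beta\le C\form_\beta$ is the key step. The reverse bound fails, which is the source of strictness.

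\textbf{A measure in $\mathsf{S}_0(\form)$.} The same estimate gives $c\int u^2\,d\mu\le\form_{2\beta_0}(u,u)$. Choose a compact set $K$ and set $\mu_K:=\mathds{1}_K\mu$, which is a finite Radon measure. By Cauchy--Schwarz,
\[
\int|u|\,d\mu_K\le\mu(K)^{1/2}\Big(\int u^2\,d\mu\Big)^{1/2}\le \big(\mu(K)/c\big)^{1/2}\sqrt{\form_{2\beta_0}(u,u)} ,
\]
so $\mu_K\in\mathsf{S}_0(\form)$. More generally, $g\mu\in\mathsf{S}_0(\form)$ for every nonnegative $g\in L^2(\mu)$.

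\textbf{Main obstacle: a measure not in $\mathsf{S}_0(\form^0)$.} Condition \eqref{eq:break_1} says the embedding $D(\form^0)\hookrightarrow L^2(\mu)$ is unbounded. Membership in $\mathsf{S}_0(\form^0)$, however, is an $L^1$-type condition, so we must turn failure of the $L^2$ bound into failure of an $L^1$ bound. We argue by contradiction. Suppose $g\mu\in\mathsf{S}_0(\form^0)$ for every nonnegative $g\in L^2(\mu)$ (with compact support, to keep Radon measures). Then each $g$ yields a bounded functional $u\mapsto\int u g\,d\mu$ on $(D(\form^0),\form^0_{\beta_0})$. Consider the family of maps $g\mapsto\int u g\,d\mu$, indexed by $u\in\mathscr{C}$ with $\form^0_{\beta_0}(u,u)\le1$, as linear functionals on $L^2(\mu)$ (splitting $g=g^+-g^-$). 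This family is pointwise bounded in $g$. By the uniform boundedness principle it is then uniformly bounded, which gives $\|u\|_{L^2(\mu)}\le C\sqrt{\form^0_{\beta_0}(u,u)}$ and contradicts \eqref{eq:break_1}.

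Some care is needed with localization. If \eqref{eq:break_1} only blows up along functions with growing supports, we first use a partition into compact sets, or an exhaustion by compact sets $K_n$ with a diagonal argument, to reduce to a single compact $K$. Some $g\in L^2(\mu|_K)$ then has $g\mu\notin\mathsf{S}_0(\form^0)$, while $g\mu\in\mathsf{S}_0(\form)$ by the previous part, which proves strictness. The fallback, if localization is problematic, is to take $g\in L^2(\mu)$ with $g\mu$ locally finite, since Radon-ness only needs local finiteness.
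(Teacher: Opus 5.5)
The inclusion $\mathsf S_0(\form^0)\subset\mathsf S_0(\form)$ and the proof that $g\mu\in\mathsf S_0(\form)$ for $0\le g\in L^2(E;\mu)$ are fine. The core idea of your main step is exactly the paper's: Banach--Steinhaus applied to the functionals $g\mapsto\int_E ug\,d\mu$ on $L^2(E;\mu)$, indexed by the $\form^0_{\beta_0}$-unit ball of $\mathscr C$. The localization you build around it, however, fails, for two reasons.

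First, the uniform boundedness principle needs pointwise boundedness on the whole Banach space. If you assume $g\mu\in\mathsf S_0(\form^0)$ only for compactly supported $g$, you get pointwise boundedness only on a dense, non-closed subspace of $L^2(E;\mu)$. From that, $\sup_u\|u\|_{L^2(\mu)}<\infty$ does not follow.

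Second, the reduction to a single compact $K$ is impossible in general, because the blow-up in \eqref{eq:break_1} may come only from functions escaping to infinity. For example, take $E=\mathbb R^d$, $\form^0$ the Dirichlet integral, and $\mu=w\,dx$ with $w$ continuous and $w(x)\to\infty$ as $|x|\to\infty$. Then $\|u\|_{L^2(\mu|_K)}^2\le\beta_0^{-1}\sup_K w\,\form^0_{\beta_0}(u,u)$ for every compact $K$. So every compactly supported $g\in L^2(E;\mu)$ gives $g\mu\in\mathsf S_0(\form^0)$, and your main route produces no witness at all.

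The missing observation is one line, and it makes localization unnecessary. For every $g\in L^2(E;\mu)$ and every compact $K$,
\[
\int_K|g|\,d\mu\le\|g\|_{L^2(E;\mu)}\,\mu(K)^{1/2}<\infty ,
\]
so $|g|\mu$ is automatically a positive Radon measure. You may therefore assume, for contradiction, that $|g|\mu\in\mathsf S_0(\form^0)$ for \emph{every} $g\in L^2(E;\mu)$. Each $F_u\colon g\mapsto\int_E ug\,d\mu$ is bounded on $L^2(E;\mu)$, with $\|F_u\|=\|u\|_{L^2(E;\mu)}$, since $u\in C_0(E)$. The family $\{F_u\}$ is then pointwise bounded on all of $L^2(E;\mu)$, and Banach--Steinhaus contradicts \eqref{eq:break_1}.

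This is precisely the paper's proof, which takes a single $h\in L^2(E;\mu)$ with $\Lambda_h$ discontinuous and uses $|h|\mu$ as the witness. Your ``fallback'' is this argument. Make it the main line, justify it with the estimate above, and drop the compact-support restriction and the diagonal argument.
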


\begin{proof}
By (\ref{eq:break_2}), it holds that \(\form^0(u,u) \leq \form_{\beta_0}(u,u)\) and so $\mathsf{S}_0(\form^0) \subset \mathsf{S}_0(\form)$.

By (\ref{eq:break_1}), there exists
\(h\in L^2(E;\mu)\) such that the linear functional
\[\Lambda_h(u):=\int_E u h\,d\mu ,\qquad u\in\mathscr{C}\]
is not continuous with respect to the norm $(\form_{\beta_0}^0)^{1/2}$. Indeed, suppose that $\Lambda_h$ is continuous for every $h\in L^2(E;\mu)$. For each $u\in\mathscr{C}$, we define
\[F_u:L^2(E;\mu) \ni h \mapsto \int_E uh\,d\mu \in \mathbb{R}.\]
Then $F_u$ is a bounded linear functional on
$L^2(E;\mu)$ and \(\|F_u\| = \|u\|_{L^2(E;\mu)}\). For \(B:=\{u\in\mathscr{C}:\form_{\beta_0}^0(u,u)\le 1\}\), we have
\[
\sup_{u\in B}|F_u(h)|
=
\sup_{u\in B}
\left|\int_E uh\,d\mu\right|
<\infty .
\]
Hence the family \(\{F_u\mid u\in B\}
\subset (L^2(E;\mu))^* \) is pointwise bounded. By the Banach--Steinhaus theorem,
we have \(\sup_{u\in B}
\|u\|_{L^2(E;\mu)}=\sup_{u\in B}\|F_u\| <\infty\) and this contradicts \eqref{eq:break_1}.

We take \(h\in L^2(E;\mu)\) such that \(\Lambda_h\) is not continuous. Then it holds that \(|h|\mu \not \in \mathsf{S}_0(\form^0)\). However, by (\ref{eq:break_2}), for every $u\in\mathscr{C}$,
\[\form_{\beta_0}(u,u) =  \form^0(u,u)+b(u,u)+\beta_0\|u\|_{L^2(E;\mathfrak{m})}^2 \geq c\|u\|_{L^2(E;\mu)}^2  \]
and it holds that
\[\left|\int_E u |h|\,d\mu\right| \leq \|h\|_{L^2(E;\mu)}\|u\|_{L^2(E;\mu)}
\leq \frac{\|h\|_{L^2(E;\mu)}}{\sqrt c} \sqrt{\form_{\beta_0}(u,u) }.\]
Hence we have \(|h|\mu \in \mathsf{S}_0(\form).\)
\end{proof}

\begin{exam}\rm
Consider that $D := \{|x| < R\} \subset \mathbb{R}^d$ ($d \ge 3$) is the open ball centered at the origin of radius $R>0$. We define a base symmetric form with a degenerate diffusion coefficient $\gamma \ge 0$ as follows:
\begin{align*}
\form^0(u, v) = \int_D \nabla u(x) \cdot \nabla v(x) |x|^\gamma dx, \quad u, v \in C_0^\infty(D).
\end{align*}
To consider a non-symmetric perturbation that yields a failure of the standard norm equivalence, we define an inward-pointing singular drift $B(x) = -c \frac{x}{|x|}$ ($c>0$) and pair it with a singular killing potential. For a parameter $\delta$ satisfying $0<\delta<d-1$, we specify the perturbation as
\begin{align*}
b(u, v)  & := \int_D B(x) \cdot \nabla u(x) \,  v(x) |x|^{-\delta} dx + \int_D u(x) v(x) |x|^{-(2\delta+\gamma)} dx  \\
& = -c \int_D  x \cdot \nabla u(x) \,  v(x) |x|^{-\delta-1} dx + \int_D u(x) v(x) |x|^{-(2\delta+\gamma)} dx.
\end{align*}
By denoting the drift as $\tilde{B}(x) = B(x)|x|^{-\delta}$, its divergence is $\text{div}\tilde{B} = -c(d-1-\delta)|x|^{-1-\delta}$. The diagonal component evaluates to
\begin{align*}
b(u, u) = \frac{c(d-1-\delta)}{2} \int_D u(x)^2 |x|^{-1-\delta} dx + \int_D u(x)^2 |x|^{-(2\delta+\gamma)} dx.
\end{align*}
Since $\delta < d-1$, both terms are strictly positive, ensuring the lower boundedness $b(u, u) \ge 0$.  Furthermore, the truncation 
$u^\# = (0 \vee u) \wedge 1$ yields $b(u^\#, u-u^\#) \ge 0$ owing to the disjoint gradient supports and the positivity of the potential. 
Thus, the Markov property is satisfied. 

The accompanied potential $|x|^{-(2\delta+\gamma)}$ absorbs the non-symmetric drift, ensuring that the perturbed form $\form = \form^0 + b$ satisfies the weak sector condition.  Therefore, $\form$ is a well-defined semi-Dirichlet form for any $\delta \in (0, d-1)$.
Indeed, we explicitly verify the sector condition $(\form.2)$ to demonstrate how the accompanied singular potential $|x|^{-(2\delta+\gamma)}$ controls the asymmetric drift. Since $|B(x)| = c$, applying the Cauchy--Schwarz inequality to the drift term yields
\begin{align*}
\left| \int_D B(x) \cdot \nabla u(x) \, v(x) |x|^{-\delta} dx \right| 
&\le c \int_D |\nabla u(x)| \, |x|^{\gamma/2}  |v(x)| \, |x|^{-\delta-\gamma/2} dx \\
&\le c \left( \int_D |\nabla u(x)|^2 |x|^\gamma dx \right)^{1/2} \left( \int_D v(x)^2 |x|^{-(2\delta+\gamma)} dx \right)^{1/2} \\
&\le c \form^0(u, u)^{1/2} b(v, v)^{1/2} \\
&\le c \form(u, u)^{1/2} \form(v, v)^{1/2}.
\end{align*}
As for the potential term, it is similarly bounded by applying the Cauchy--Schwarz inequality:
\begin{align*}
\int_D |u(x)v(x)|  \, |x|^{-(2\delta+\gamma)} dx 
&\le \left( \int_D u(x)^2 |x|^{-(2\delta+\gamma)} dx \right)^{1/2} \left( \int_D v(x)^2 |x|^{-(2\delta+\gamma)} dx \right)^{1/2} \\
&\le b(u, u)^{1/2} b(v, v)^{1/2} \\
&\le \form(u, u)^{1/2} \form(v, v)^{1/2}.
\end{align*}
Thus, we obtain $|b(u, v)| \le (c+1)\form(u, u)^{1/2} \form(v, v)^{1/2}$. This ensures that the perturbed form 
$\form$  satisfies the sector condition and is well-defined as a semi-Dirichlet form.

Since $\delta < d-1$, the perturbed form $b(u,u)$ is nonnegative. This fact  immediately implies a fundamental energy ordering: $\form(u, u) = \form^0(u, u) + b(u, u) \ge \form^0(u, u)$. According to the definition of the class of measures of finite energy integral, any measure $\nu$ bounded by the energy of $\form^0$ is automatically bounded by the fortified energy of $\form$. Thus, the inclusion relation $\mathsf{S}_0(\form^0) \subseteq \mathsf{S}_0(\form)$ automatically holds for any $\delta \in (0, d-1)$.

We now investigate the structure of $\mathsf{S}_0(\form)$ in terms of the parameter $\delta$. The boundary is characterized by $\delta = 1-\gamma$, where the spatial singularity of the measure corresponds directly to the weight $|x|^{\gamma-2}$ appearing in the associated Hardy-type inequality.

\smallskip
\noindent
\underline{Case 1: $\delta \le 1-\gamma$ (Stable Phase).}

\smallskip
In this condition, the singularities of both the drift and the potential are controlled by the base energy $\form^0$ via the weighted Hardy inequality. Hence, the norm equivalence holds, and by Proposition 3.1, we have $\mathsf{S}_0(\form) = \mathsf{S}_0(\form^0)$.

\smallskip
\noindent
\underline{Case 2:  $\delta > 1-\gamma$ (Symmetry-Breaking Phase).}

\smallskip
By applying \(d\mu := |x|^{-(2\delta + \gamma)}dx\) and \(\beta_0=0\) to Proposition \ref{prop:break}, $\mathsf{S}_0(\form^0) \subsetneq \mathsf{S}_0(\form)$ holds. Indeed, for $\eta\in C_0^\infty([0,\infty))$ satisfying \( 0\leq \eta \leq 1\), \(\eta|_{|x|\leq 1} = 1\) and \(\eta|_{[2,\infty)}=0\), we set \( u_r(x):=\eta\!\left(|x|/r\right)\). By calculation, we have \( \|u_r\|_{L^2(E;\mu)}^2 \asymp r^{d-2\delta-\gamma}\) and \(\form^0(u_r,u_r) \asymp
r^{d+\gamma-2}.\)
Hence it holds that
\[\frac{\|u_r\|_{L^2(E;\mu)}^2}{\form^0(u_r,u_r)} \asymp r^{2-2\delta-2\gamma} \to \infty\]
as \(r \to 0\) and so \eqref{eq:break_1} holds.

We remark that, a standard radial integration analysis yields that \(\nu = |x|^{-\kappa} dx \) belongs to $\mathsf{S}_0(\form^0)$ if and only if $\kappa < \frac{d+2-\gamma}{2}$, and \(\nu\) belongs to  $\mathsf{S}_0(\form)$ if and only if $\kappa < \delta + \frac{d+\gamma}{2}$. Since $\delta > 1-\gamma$, we strictly have $\frac{d+2-\gamma}{2} < \delta + \frac{d+\gamma}{2}$. Consequently, it holds that \(\nu \in \mathsf{S}_0(\form) \setminus \mathsf{S}_0(\form^0)\)  for \(\kappa \in [\frac{d+2-\gamma}{2}, \delta + \frac{d+\gamma}{2})\).

This example shows that a stronger non-symmetric singularity leads to a larger class of measures of finite energy integral, which verifies the strict inclusion $\mathsf{S}_0(\form^0) \subsetneq \mathsf{S}_0(\form)$.

\end{exam}

\section{Non-Symmetric jump-type forms and forms associated with transposed jump kernels}\label{sec_jump}

In this section, we return to the general setting of $(E,{\sf d})$ and $\mathfrak{m}$ introduced in Section 1. Namely, $(E,{\sf d})$ is a locally compact separable metric space 
and $\mathfrak{m}$ is a positive Radon measure on $E$ with full support. 

Let $k(x,y)$ be a non-symmetric positive kernel defined on $(E \times E) \setminus \{(x,x) \mid x \in E\}$; that is, $k(x,y) \ge 0$ for $x \neq y$, 
and $k(x,y)$ is not identically equal to its transposed jump kernel $k^*(x,y)$, where $k^*(x,y) := k(y,x)$ (i.e., $k \not\equiv k^*$). We assume that $k$ satisfies
\begin{equation} \label{jump1}
\left\{
\begin{array}{l}
\dis x\mapsto \int_{y\not=x} \big(1 \wedge {\sf d} (x,y)^2\big) k_s(x,y) \mathfrak{m}(dy) \in L^1_{\sf loc}(E;\mathfrak{m}),  \\
\dis x\mapsto \int_{\substack{k_s(x,y)>0 \\ y \not=x}} \frac{|k(x,y)-k(y,x)|^2}{k_s(x,y)}\mathfrak{m}(dy) \in L^{\infty}(E;\mathfrak{m}),
\end{array}
\right.
\end{equation}
where $k_s(x,y) := (k(x,y) + k(y,x))/2$ denotes the symmetric part of $k$. 
For $u, v \in C_0^{\sf lip}(E)$, we define the bilinear form $\form$ by
$$
\form(u,v):= -\lim_{n\to \infty} \int_E {\mathscr L}^n u(x) v(x) \mathfrak{m}(dx), 
$$
whenever the limit on the right-hand side exists, where
$$
{\mathscr L}^nu(x):=\int_{{\mathsf d}(x,y)\ge 1/n} \big(u(y)-u(x) \big) k(x,y) \mathfrak{m}(dy), \quad x\in E
$$
for $n\in{\mathbb N}$. 
Under the conditions \eqref{jump1}, it follows from \cite{FOT11, O13} that $(\form, C_0^{\sf lip}(E))$ is closable on $L^2(E)$, and its closure $(\form, \dom)$ 
becomes a lower bounded semi-Dirichlet form with some lower bound parameter $\beta_0$ and the limit is expressed as follows for $u, v\in \dom$:
\begin{align*}
\form(u,v) & := \form^s(u,v) +\form^a(u,v) \\[10pt]
& := \frac 12 \iint_{x\not=y}\big(u(y)-u(x)\big) \big(v(y)-v(x)\big) k(x,y)\mathfrak{m}(dx)\mathfrak{m}(dy) \\
& \qquad +\frac12\iint_{x\not=y} \big(u(y)-u(x)\big) v(x) \big(k(x,y)-k(y,x)\big) \mathfrak{m}(dx)\mathfrak{m}(dy).
\end{align*}
Since $k^*(x,y) = k(y,x)$ also satisfies \eqref{jump1}, the bilinear form $\form^*$ associated with the transposed jump kernel $k^*$, is likewise a well-defined lower bounded semi-Dirichlet form on $L^2(E)$ with the same lower bound parameter $\beta_0$. By definition, the anti-symmetric part vanishes on the diagonal (i.e., $\form^a(u, u) = 0$) due to the anti-symmetry of the kernel $k(y,x)-k(x,y)$, ensuring the identity $\form_\beta(u,u) = \form^*_\beta(u,u)$ for all $u \in \dom$ and $\beta>\beta_0$. This quadratic equivalence, combined with the sector condition inherited from the kernel, ensures that the form $\form^*$ yields the exact same domain 
$D(\form^*) = \dom$ and determines the identical topology on the energy space. Thus, $\mathsf{S}_0(\form)=\mathsf{S}_0(\form^*)$ also holds.
Then, for $\nu \in\mathsf{S}_0(\form)$ and $\beta>\beta_0$, there exist $U_\beta \nu, \widehat{U}_\beta \nu, U^*_\beta \nu, \widehat{U}^*_\beta \nu \in \dom$  such that 
$$
\form_\beta(U_\beta \nu, v) = \form_\beta(v, \widehat{U}_\beta \nu)=\int_E v(x)\nu(dx)
$$
and 
$$
\form^*_\beta(U^*_\beta \nu, v) = \form^*_\beta(v, \widehat{U}^*_\beta \nu)=\int_E v(x)\nu(dx)
$$
hold for any $v\in \dom \cap C(E)$. 

However, the exact pointwise values of the associated potentials and co-potentials differ due to the spatial asymmetry of the jump kernel. 
The following proposition reveals that their exact discrepancies are intertwined and completely governed by the anti-symmetric part $\form^a$ acting as a continuous linear functional on the energy space, entirely bypassing any problematic singular integral representations (such as principal values) for the asymmetry.

\begin{prop}\label{prop:potential_relation}
For any $\nu \in \mathsf{S}_0(\form)$ and $\beta > \beta_0$, the potentials satisfy the following relations for any test function $v \in \dom$:
\begin{align}
\form^*_\beta(v, \widehat{U}^*_\beta \nu - U_\beta \nu) &= \form^a(U_\beta \nu, v) + \form^a(v, U_\beta \nu), \label{eq:relation_U_star} \\
\form_\beta(\widehat{U}^*_\beta \nu - U_\beta \nu, v) &= \form^a(v, \widehat{U}^*_\beta \nu) + \form^a(\widehat{U}^*_\beta \nu, v), \label{eq:relation_U} \\
\form^*_\beta(U^*_\beta \nu - \widehat{U}_\beta \nu, v) &= \form^a(v, \widehat{U}_\beta \nu) + \form^a(\widehat{U}_\beta \nu, v), \label{eq:relation_coU_star} \\
\form_\beta(v, U^*_\beta \nu - \widehat{U}_\beta \nu) &= \form^a(U^*_\beta \nu, v) + \form^a(v, U^*_\beta \nu). \label{eq:relation_coU}
\end{align}
\end{prop}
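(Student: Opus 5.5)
The proof rests on one algebraic observation: the forms $\form$ and $\form^*$ share the same symmetric part, and their anti-symmetric parts differ only by a sign. Granting this, each of the four identities follows from a two-line subtraction using the defining relations of the potentials.

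\textbf{Step 1 (comparing the two decompositions).} The integrand $(u(y)-u(x))(v(y)-v(x))$ is invariant under $x\leftrightarrow y$. Hence $\form^s$ computed with $k$ coincides with $\form^s$ computed with the symmetric part $k_s$. Since $k$ and $k^*$ have the same symmetric part, the symmetric part $(\form^*)^s$ of $\form^*$ equals $\form^s$. The anti-symmetric part of $\form^*$ carries the kernel $k^*(x,y)-k^*(y,x)=-(k(x,y)-k(y,x))$, so $(\form^*)^a=-\form^a$. I would check both facts first on $C_0^{\sf lip}(E)$ and then extend them to $\dom$ by closure. This extension uses the boundedness of $\form^a$ on the energy space coming from the second condition in \eqref{jump1}, via Cauchy--Schwarz against $k_s$. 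The result is
\begin{equation*}
\form_\beta(u,v)=\form^s_\beta(u,v)+\form^a(u,v),\qquad \form^*_\beta(u,v)=\form^s_\beta(u,v)-\form^a(u,v),\qquad u,v\in\dom,
\end{equation*}
where $\form^s_\beta:=\form^s+\beta(\cdot,\cdot)$ is symmetric.

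\textbf{Step 2 (extending the defining relations).} The relations $\form_\beta(U_\beta\nu,v)=\form_\beta(v,\widehat U_\beta\nu)=\int v\,d\nu$, and their analogues for $U^*_\beta\nu$ and $\widehat U^*_\beta\nu$, hold for test functions $v$. I would extend the resulting identities between forms to all $v\in\dom$ by density of the core, exactly as in \eqref{S_0-0}. In particular:
\begin{itemize}
\item $\form^*_\beta(v,\widehat U^*_\beta\nu)=\form_\beta(U_\beta\nu,v)$ for all $v\in\dom$;
\item $\form_\beta(v,\widehat U_\beta\nu)=\form^*_\beta(U^*_\beta\nu,v)$ for all $v\in\dom$.
\end{itemize}
This step is the only place where the measure $\nu$ enters, and it is the only delicate point. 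The integrals $\int v\,d\nu$ must be eliminated before passing to the limit, so that no quasi-continuity issue arises.

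\textbf{Step 3 (subtraction).} For \eqref{eq:relation_U_star}, I would write the two sides of the first identity in Step 2 using Step 1:
\begin{equation*}
\form^*_\beta(v,\widehat U^*_\beta\nu)=\form_\beta(U_\beta\nu,v)=\form^s_\beta(U_\beta\nu,v)+\form^a(U_\beta\nu,v),
\end{equation*}
\begin{equation*}
\form^*_\beta(v,U_\beta\nu)=\form^s_\beta(v,U_\beta\nu)-\form^a(v,U_\beta\nu).
\end{equation*}
Subtracting, and using the symmetry of $\form^s_\beta$, gives \eqref{eq:relation_U_star}.

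The identity \eqref{eq:relation_U} follows the same way. Here I expand $\form_\beta(\widehat U^*_\beta\nu,v)=\form^s_\beta+\form^a$ and $\form_\beta(U_\beta\nu,v)=\form^*_\beta(v,\widehat U^*_\beta\nu)=\form^s_\beta-\form^a(v,\widehat U^*_\beta\nu)$. Identities \eqref{eq:relation_coU_star} and \eqref{eq:relation_coU} come from the analogous computations based on the second identity in Step 2, with the roles of $\form$ and $\form^*$ interchanged.

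Beyond the density argument in Step 2, the remaining work is bookkeeping of signs and argument order in the four subtractions.
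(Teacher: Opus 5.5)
Your proposal is correct and follows essentially the paper's argument. You decompose $\form_\beta=\form^s_\beta+\form^a$ and $\form^*_\beta=\form^s_\beta-\form^a$, equate the defining relations of $U_\beta\nu$ and $\widehat U^*_\beta\nu$ (respectively $\widehat U_\beta\nu$ and $U^*_\beta\nu$) after extending them to all of $\dom$ by density, and subtract using the symmetry of $\form^s_\beta$. The only difference is that you justify $(\form^*)^s=\form^s$ and $(\form^*)^a=-\form^a$ explicitly, and you remove $\int v\,d\nu$ before passing to the limit, both of which the paper leaves implicit.
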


\begin{proof}
By the definitions of $\form^s$ and $\form^a$, we can decompose the forms as $\form_\beta = \form^s_\beta + \form^a$ and $\form^*_\beta = \form^s_\beta - \form^a$. 
First, we prove \eqref{eq:relation_U_star} and \eqref{eq:relation_U}. By the definition of the potentials $U_\beta \nu$ and $\widehat{U}^*_\beta \nu$, we have
\begin{align}
\form^s_\beta(U_\beta \nu, v) + \form^a(U_\beta \nu, v) &= \form_\beta(U_\beta \nu, v) = \int_E v(x)\nu(dx), \label{eq:def_U1} \\
\form^s_\beta(v, \widehat{U}^*_\beta \nu) - \form^a(v, \widehat{U}^*_\beta \nu) &= \form^*_\beta(v, \widehat{U}^*_\beta \nu) = \int_E v(x)\nu(dx), \label{eq:def_U2}
\end{align}
for any $v \in \dom \cap C(E)$. Since $\dom \cap C(E)$ is dense in $\dom$ with respect to the energy norm, these equations hold for all $v \in \dom$. Equating \eqref{eq:def_U1} and \eqref{eq:def_U2}, and using the symmetry $\form^s_\beta(U_\beta \nu, v) = \form^s_\beta(v, U_\beta \nu)$, we obtain
\begin{equation}\label{eq:sym_diff1}
\form^s_\beta(v, \widehat{U}^*_\beta \nu - U_\beta \nu) = \form^a(U_\beta \nu, v) + \form^a(v, \widehat{U}^*_\beta \nu).
\end{equation}
We then evaluate the left-hand side of \eqref{eq:relation_U_star}. By the decomposition of $\form^*_\beta$ and substituting \eqref{eq:sym_diff1}, we have
\begin{align*}
\form^*_\beta(v, \widehat{U}^*_\beta \nu - U_\beta \nu) 
&= \form^s_\beta(v, \widehat{U}^*_\beta \nu - U_\beta \nu) - \form^a(v, \widehat{U}^*_\beta \nu - U_\beta \nu) \\
&= \Big( \form^a(U_\beta \nu, v) + \form^a(v, \widehat{U}^*_\beta \nu) \Big) - \form^a(v, \widehat{U}^*_\beta \nu) + \form^a(v, U_\beta \nu) \\
&= \form^a(U_\beta \nu, v) + \form^a(v, U_\beta \nu),
\end{align*}
which shows \eqref{eq:relation_U_star}. Similarly, evaluating the left-hand side of \eqref{eq:relation_U} yields
\begin{align*}
\form_\beta(\widehat{U}^*_\beta \nu - U_\beta \nu, v) 
&= \form^s_\beta(\widehat{U}^*_\beta \nu - U_\beta \nu, v) + \form^a(\widehat{U}^*_\beta \nu - U_\beta \nu, v) \\
&= \Big( \form^a(U_\beta \nu, v) + \form^a(v, \widehat{U}^*_\beta \nu) \Big) + \form^a(\widehat{U}^*_\beta \nu, v) - \form^a(U_\beta \nu, v) \\
&= \form^a(v, \widehat{U}^*_\beta \nu) + \form^a(\widehat{U}^*_\beta \nu, v),
\end{align*}
which shows \eqref{eq:relation_U}.

We can obtain \eqref{eq:relation_coU_star} and \eqref{eq:relation_coU} similarly.
\end{proof}

Furthermore, if the asymmetric jump kernel possesses sufficient integrability, the relationships in Proposition \ref{prop:potential_relation} 
can be simplified. Suppose that the mapping $y \mapsto k(x, y) - k(y, x)$ is integrable with respect to $\mathfrak{m}$, and the function
\begin{equation}
K(x) := \int_{y \neq x} \big(k(x, y) - k(y, x)\big) \mathfrak{m}(dy)
\end{equation}
is well-defined and belongs to $L^1_{\sf loc}(E)$. In this case, the sum of the anti-symmetric parts $\form^a(u, v) + \form^a(v, u)$ reduces to a simple multiplication operator. 
Indeed, by swapping the variables $x$ and $y$, the cross terms $u(y)v(x)$ and $u(x)v(y)$ are canceled out due to the anti-symmetry of $k(y, x) - k(x, y)$, leaving only the diagonal term:
\begin{align*}
\form^a(u, v) + \form^a(v, u) &= \frac{1}{2} \iint\limits_{x \neq y} \big(u(y)v(x) + u(x)v(y) - 2u(x)v(x)\big) \big(k(y, x) - k(x, y)\big) \mathfrak{m}(dx)\mathfrak{m}(dy) \\
&= - \iint\limits_{x \neq y} u(x)v(x) \big(k(y, x) - k(x, y)\big) \mathfrak{m}(dx)\mathfrak{m}(dy) \\
&= \int\limits_E u(x)v(x) K(x) \mathfrak{m}(dx).
\end{align*}
This $K(x)$ probabilistically represents the rate of killing (or creation) of mass. Because of this structural discrepancy caused by the nonsymmetric
jumps, neither the dual form $\widehat{\form}$ nor the dual form of $\form^*$ satisfies the Markov property. Thus, the time-reversed dual processes inherently 
experience spatial killing, making them semi-Dirichlet forms.

Substituting this identity into Proposition \ref{prop:potential_relation} yields the following explicit perturbation formula.

\begin{cor}  \label{cor:section6}

Let $\nu \in \mathsf{S}_0(\form)$ and $\beta > \beta_0$. Assume that $K \in L^1_{\sf loc}(E)$ as above. If the measures $U_\beta \nu \cdot K \mathfrak{m}$ and $\widehat{U}_\beta \nu \cdot K \mathfrak{m}$ belong to $\mathsf{S}_0(\form^*) = \mathsf{S}_0(\form)$, then the potentials satisfy the following :
$$
\widehat{U}^*_\beta \nu = U_\beta \nu + \widehat{U}^*_\beta(U_\beta \nu \cdot K \mathfrak{m}) = U_\beta \nu + U_\beta(\widehat{U}^*_\beta \nu \cdot K \mathfrak{m}). 
$$
Similarly, we have
$$
U^*_\beta \nu = \widehat{U}_\beta \nu + U^*_\beta(\widehat{U}_\beta \nu \cdot K \mathfrak{m})= \widehat{U}_\beta \nu + \widehat{U}_\beta(U^*_\beta \nu \cdot K \mathfrak{m}). 
$$
\end{cor}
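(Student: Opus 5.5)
The plan is to combine Proposition \ref{prop:potential_relation} with the multiplication-operator identity $\form^a(u,v)+\form^a(v,u)=\int_E uvK\,d\mathfrak{m}$, and then to use the uniqueness of potentials and co-potentials given by the Lax--Milgram theorem. I will carry out the first identity in detail; the other three are handled in the same way.

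\textbf{Step 1: extend the multiplication identity.} The identity $\form^a(u,v)+\form^a(v,u)=\int_E uvK\,d\mathfrak{m}$ was derived formally for nice functions, for instance $u,v\in C_0^{\sf lip}(E)$. I will use it with $u=U_\beta\nu$ and $v$ a test function in $\dom\cap C_0(E)$.

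To justify this, approximate $U_\beta\nu$ by $u_n\in C_0^{\sf lip}(E)$ in the energy norm. The left-hand side is continuous in $u$, because $\form^a$ is bounded on $\dom$ by the sector condition. For the right-hand side, the assumption $U_\beta\nu\cdot K\mathfrak{m}\in\mathsf{S}_0$ gives $\int |U_\beta\nu|\,|v|\,|K|\,d\mathfrak{m}<\infty$. The convergence $\int u_n v K\,d\mathfrak{m}\to\int U_\beta\nu\, vK\,d\mathfrak{m}$ is the delicate point. Since $v$ is bounded with compact support and $K\in L^1_{\sf loc}$, this reduces to $L^1(|K|\mathfrak{m})$-convergence of $u_n$ on compacts. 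I would obtain it by first truncating $U_\beta\nu$ (normal contractions act on $\dom$, as in the proof of Theorem \ref{thm:StollmannVoigt}) and then using dominated convergence along a quasi-everywhere convergent subsequence.

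\textbf{Step 2: derive the first formula.} Substituting the identity into \eqref{eq:relation_U_star} gives
\[
\form^*_\beta\bigl(v,\widehat{U}^*_\beta\nu-U_\beta\nu\bigr)=\int_E v\,U_\beta\nu\, K\,d\mathfrak{m}\qquad\text{for all } v\in\dom\cap C_0(E).
\]
Since $U_\beta\nu\cdot K\mathfrak{m}\in\mathsf{S}_0(\form^*)$, the defining relation \eqref{S_0} for $\form^*$ shows that $\widehat{U}^*_\beta(U_\beta\nu\cdot K\mathfrak{m})$ satisfies the same equation. Lax--Milgram uniqueness, or coercivity of $\form^*_\beta$, then gives
\[
\widehat{U}^*_\beta\nu-U_\beta\nu=\widehat{U}^*_\beta(U_\beta\nu\cdot K\mathfrak{m}).
\]
One caveat: $K$ may be signed, so these are really signed measures. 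The potential is still defined by linearity through the positive and negative parts, or directly by Lax--Milgram, because the functional $v\mapsto\int vU_\beta\nu K\,d\mathfrak{m}$ is bounded.

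\textbf{Step 3: the remaining formulas.} Using \eqref{eq:relation_U} in the same way, $\form_\beta(\widehat{U}^*_\beta\nu-U_\beta\nu,v)=\int v\,\widehat{U}^*_\beta\nu\,K\,d\mathfrak{m}$, so the difference equals $U_\beta(\widehat{U}^*_\beta\nu\cdot K\mathfrak{m})$. This step needs $\widehat{U}^*_\beta\nu\cdot K\mathfrak{m}$ to induce a bounded functional. That is not literally among the hypotheses, but it can be derived. By Step 2, $\widehat{U}^*_\beta\nu$ lies in $\dom$, and the functional it induces equals $\form_\beta(\widehat{U}^*_\beta\nu-U_\beta\nu,\cdot)$, which is bounded. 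So it is automatically continuous, and this also justifies Step 1 for $\widehat{U}^*_\beta\nu$ a posteriori.

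The pair of formulas for $U^*_\beta\nu$ follows symmetrically from \eqref{eq:relation_coU_star} and \eqref{eq:relation_coU}, now using the hypothesis on $\widehat{U}_\beta\nu\cdot K\mathfrak{m}$.

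\textbf{Main obstacle.} I expect the hardest part to be Step 1: justifying the multiplication identity for general energy-space elements when $K$ is merely $L^1_{\sf loc}$, together with handling the signed measure $K\mathfrak{m}$.
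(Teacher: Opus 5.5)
Your Steps 1 and 2 follow the paper's route. The paper obtains the corollary by substituting $\form^a(u,v)+\form^a(v,u)=\int_E uvK\,d\mathfrak{m}$ into Proposition~\ref{prop:potential_relation} and identifying the resulting functionals with potentials by uniqueness. Your derivations of $\widehat{U}^*_\beta\nu=U_\beta\nu+\widehat{U}^*_\beta(U_\beta\nu\cdot K\mathfrak{m})$ and $U^*_\beta\nu=\widehat{U}_\beta\nu+U^*_\beta(\widehat{U}_\beta\nu\cdot K\mathfrak{m})$ do exactly this, with more care about extending the identity and the sign of $K$. These two equalities use precisely the two stated hypotheses.

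The gap is in Step 3, which concerns the other two equalities.
\begin{itemize}
\item You argue that $\widehat{U}^*_\beta\nu\cdot K\mathfrak{m}$ automatically defines a bounded functional, because that functional ``equals $\form_\beta(\widehat{U}^*_\beta\nu-U_\beta\nu,\cdot)$''. But this equality is \eqref{eq:relation_U} combined with the multiplication identity for $u=\widehat{U}^*_\beta\nu$.
\item By your own Step 1, that identity is available for a given $u\in\dom$ only once $\int_E|u|\,|v|\,|K|\,d\mathfrak{m}<\infty$ for $v\in\dom\cap C_0(E)$. Without this, $\int_E v\,\widehat{U}^*_\beta\nu\,K\,d\mathfrak{m}$ is not even defined as a Lebesgue integral. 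For $u=\widehat{U}^*_\beta\nu$, that integrability is exactly what you are trying to establish, so the ``a posteriori'' justification is circular.
\item What truncation honestly gives is only that $\lim_{M\to\infty}\int_E v\,(\widehat{U}^*_\beta\nu\wedge M)\,K\,d\mathfrak{m}$ exists and equals $\form_\beta(\widehat{U}^*_\beta\nu-U_\beta\nu,v)$.
\item $K$ is signed and only locally integrable, and $\widehat{U}^*_\beta\nu$ need not be locally bounded (compare the blow-up of the co-potential in Section~\ref{sec_preparation}). So the $K^+$ and $K^-$ contributions may each diverge. You then get neither that $\widehat{U}^*_\beta\nu\cdot K\mathfrak{m}$ is a Radon measure nor that it lies in $\mathsf{S}_0$.
\item In any case, boundedness of $v\mapsto\int v\,d\mu$ is weaker than $\mathsf{S}_0$-membership, which controls $\int|v|\,d|\mu|$.
\item If you instead read $U_\beta(\widehat{U}^*_\beta\nu\cdot K\mathfrak{m})$ as the Lax--Milgram solution for that limit functional, the equality becomes a tautology.
\end{itemize}
The same objection applies to $U^*_\beta\nu\cdot K\mathfrak{m}$ in the second line.

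So the second equality in each line needs an additional hypothesis: $\widehat{U}^*_\beta\nu\cdot K\mathfrak{m}$ and $U^*_\beta\nu\cdot K\mathfrak{m}$ must belong to $\mathsf{S}_0$. The paper's one-line substitution uses this tacitly. You should state it as an assumption rather than claim to derive it; with it, Step 3 goes through exactly as Step 2.
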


\smallskip
It should be noted that the identities in Corollary \ref{cor:section6} are not explicit formulas, but rather represent \textit{resolvent-type integral equations}. 
They explicitly describe how the potentials are intertwined through the asymmetric perturbation $K\mathfrak{m}$.

\begin{exam} \rm 
Let us consider a stable-like kernel on 
$E = \real^d$ ($d \ge 3$). For $0 < \alpha < 2$ and $\gamma>0$, we define
\begin{equation*}
k(x, y) := a(x)|x-y|^{-d-\alpha}, \quad \text{where } a(x) := (1+|x|^2)^{\gamma/2}.
\end{equation*}
Here, the variable jump rate $a(x)$ smoothly diverges at spatial infinity. If we restrict the growth rate to be strictly slower than the jump index, namely $0 < \gamma < \alpha$, the condition (4.2) is satisfied. Indeed, the anti-symmetric component behaves as 
$$
\frac{|a(x)-a(y)|^2}{a(x)+a(y)}|x-y|^{-d-\alpha} \sim |y|^{\gamma-d-\alpha} \quad {\rm as} \quad |y| \to \infty,
$$
which is integrable at infinity since $\gamma < \alpha$. Thus, the kernel yields a well-defined lower bounded semi-Dirichlet form.

Let 
$$
\mathscr{L}_\alpha u(x) := {\rm P.V.} \int_{\mathbb{R}^d} (u(y)-u(x))|x-y|^{-d-\alpha}dy
$$ 
be the generator of a symmetric $\alpha$-stable process.  In our kernel $k(x,y)$, the non-local generator associated with $(\form, \dom)$ acts as 
${\mathscr L}u(x) = a(x)\mathscr{L}_\alpha u(x)$. 

For a Radon measure $\nu(dx) = f(x)dx$ with  a nonnegative density $f$ belonging to $C_0(E)$,  the $\beta$-potential $u = U_\beta \nu$ (resp. the co-potential $w = \widehat{U}_\beta \nu$)  is simply the resolvent (resp. co-resolvent) of $f$,  and these potentials formally solve the equations:
\begin{equation}\label{eq:governing-L}
-\mathscr{L} u + \beta u = f \quad \text{and} \quad -\mathscr{L}^* w + \beta w = f,
\end{equation}
where $\mathscr{L}^*$ denotes the adjoint operator of ${\mathscr L}$ on $L^2(\mathbb{R}^d)$. Since ${\mathscr L}=a{\mathscr L}_\alpha$ and 
the symmetric operator $\mathscr{L}_\alpha$ is self-adjoint on $L^2(\mathbb{R}^d)$, the self-adjointness of $\mathscr{L}_\alpha$ immediately implies that the adjoint operator acts as $\mathscr{L}^* w = \mathscr{L}_\alpha (aw)$. Thus, we can rewrite the equations in \eqref{eq:governing-L} explicitly in terms of $\mathscr{L}_\alpha$ as follows:
\begin{equation}\label{eq:governing-alpha}
-a(x)\mathscr{L}_\alpha u(x) + \beta u(x) = f(x) \quad \text{and} \quad -\mathscr{L}_\alpha (aw)(x) + \beta w(x) = f(x).
\end{equation}
By introducing $V(x) := a(x)w(x)$, the adjoint equation is rewritten as 
$$
-\mathscr{L}_\alpha V + \frac{\beta}{a(x)}V = f(x).
$$
As $|x| \to \infty$, the variable absorption term $\frac{\beta}{a(x)}$ decays to zero since $a(x) \sim |x|^\gamma$. Because $f$ has compact support and $V$ is a bounded solution, the term $\frac{\beta}{a(x)}V(x)$ vanishes at infinity and acts as a localized perturbation. Consequently, the leading asymptotic behavior of both $u$ (from the first equation in \eqref{eq:governing-alpha} after dividing by $a(x)$) and $V$ is effectively governed by the Riesz kernel of 
$-\mathscr{L}_\alpha$, ensuring that $V(x) = \mathscr{O}(|x|^{-d+\alpha})$. 
We thus obtain the sharp spatial decay rates at infinity:
\begin{equation*}
u(x) = \mathscr{O}(|x|^{-d+\alpha}) \quad \text{and} \quad w(x) = \frac{V(x)}{a(x)} = \mathscr{O}(|x|^{-d+\alpha-\gamma}) \quad \text{as } |x| \to \infty.
\end{equation*}
By applying this to $f$ that approximates the equilibrium measure of a compact set $A$, this result translates to the spatial decay of the equilibrium potentials. 

{
This difference between the pointwise asymmetry and the global energy identity can be explicitly observed through the notion of capacity. 
For a relatively compact open set $A$, the capacity is given by the mutual energy $\text{Cap}^{(\alpha)}(A) = \form_\alpha(e^\alpha_A, \widehat{e}^\alpha_A)$, where $e^\alpha_A$ and $\widehat{e}^\alpha_A$ are the $\alpha$-equilibrium and co-equilibrium potentials. 
Probabilistically, these represent the hitting probabilities to the set $A$ from the outside for the forward process and the time-reversed process, respectively. 
As demonstrated here, these two potentials exhibit distinct spatial decay rates at spatial infinity, namely, 
$e^\alpha_A(x)={\mathscr O}(|x|^{-d+\alpha})$ and $\widehat{e}^\alpha_A(x) ={\mathscr O}(|x|^{-d+\alpha-\gamma})$, while their mutual energy still yields the finite capacity. 
}

This phenomenon explicitly visualizes how the spatial non-symmetry of the jump rate generates the intrinsic killing effect in the dual process.

\end{exam}

\begin{rem} \rm
\begin{itemize}
\item[\sf (1)]   
We briefly note the local regularity of the potentials here by invoking the Schauder-type regularity results  for stable-like operators developed by Bass 
\cite[Theorem 1.2, Proposition 7.4]{B09}.  Suppose that the density $f$ is locally $\beta$-H\"older continuous for some $\beta > 0$. Since the coefficient $a(x) = (1+|x|^2)^{\gamma/2}$ is smooth and locally bounded away from zero, Bass's result ensures that the potential $u = U_\beta \nu$ is locally $C^{\alpha+\beta}$. Similarly, for the co-potential $w = \widehat{U}_\beta \nu$, the transformed function $V = aw$ satisfies an equation principally governed by the classical symmetric fractional Laplacian ${\mathscr L}_\alpha$. 
This implies that $V$, and consequently $w = V/a$, also possesses the same local $C^{\alpha+\beta}$ regularity.

This observation reveals that the spatial asymmetry of the jump rate $a(x)$ does not disrupt the local H\"older continuity. Consequently, the discrepancy between the potential $u=U_\beta \nu$ and the co-potential $w=\widehat{U}_\beta \nu$ appears solely as a structural difference in their global behaviors at spatial infinity, as seen in their decay rates above.

\item[\sf (2)]  While the specific choice of the kernel $k(x,y)=a(x)|x-y|^{-d-\alpha}$  in this example might be probabilistically related 
to a time-changed symmetric $\alpha$-stable process, it serves as the most transparent and analytically tractable model to explicitly observe the discrepancy of spatial decay rates.  A more general non-symmetric kernel, such as 
$(a(x)+b(y))|x-y|^{-d-\alpha}$ with different growth orders for $a$ and $b$, would essentially exhibit a similar discrepancy phenomenon. However, finding their exact asymptotic behavior requires highly involved singular integral estimates, which is beyond the scope of this paper.

\end{itemize}
\end{rem}


\end{document}